\numberwithin{equation}{section}
\theoremstyle{plain}
\newtheorem{theorem}{Theorem}[section]
\newtheorem{lemma}[theorem]{Lemma} 
\newtheorem{proposition}[theorem]{Proposition}
\theoremstyle{definition}
\newtheorem{remark}{Remark}[section]
\newtheorem{example}{Example}[section]
\newtheorem*{definition}{Definition}
\newtheorem*{claim}{Claim}
\def\C{\mathbb C}
\def\R{\mathbb R}
\def\Q{\mathbb Q}
\def\Z{\mathbb Z}
\def\v{\mathbf v}
\def\e{\mathbf e}
\def\0{\mathbf 0}
\def\1{\mathbf 1}
\def\T{\mathbb T}
\DeclareMathOperator{\Hess}{Hess}
\DeclareMathOperator{\Fl}{Fl}
\DeclareMathOperator{\Aut}{Aut}
\DeclareMathOperator{\Hom}{Hom}
\DeclareMathOperator{\GL}{GL}
\begin{document}
\title[Automorphisms of GKM graphs]
{Automorphisms of GKM graphs and regular semisimple Hessenberg varieties}

\author[D. Jang]{Donghoon Jang}
\address{Department of Mathematics, Pusan National University, Pusan, South Korea}
\email{donghoonjang@pusan.ac.kr}

\author[S. Kuroki]{Shintar\^o Kuroki}
\address{Department of Applied Mathematics Faculty of Science, Okayama University of Science, 1-1 Ridai-Cho Kita-Ku Okayama-shi Okayama 700-0005, Okayama, Japan}
\email{kuroki@ous.ac.jp}

\author[M. Masuda]{Mikiya Masuda}
\address{Osaka Central Advanced Mathematical Institute, Osaka Metropolitan University, Sugimoto, Sumiyoshi-ku, Osaka, 558-8585, Japan}
\email{mikiyamsd@gmail.com}

\author[T. Sato]{Takashi Sato}
\address{Osaka Central Advanced Mathematical Institute, Osaka Metropolitan University, Sugimoto, Sumiyoshi-ku, Osaka, 558-8585, Japan}
\email{00tkshst00@gmail.com}
\date{\today}

\author[H. Zeng]{Haozhi Zeng}
\address{School of Mathematics and Statistics, Huazhong University of Science and Technology, Wuhan, 430074, P.R. China}
\email{zenghaozhi@icloud.com}

\keywords{Hessenberg variety, automorphism group, GKM graph, GKM manifold} 

\subjclass[2020]{Primary: 57S25, Secondary: 57S12, 14N15} 

\begin{abstract}
A regular semisimple Hessenberg variety $\Hess(S,h)$ is a smooth subvariety of the full flag variety $\Fl(\C^n)$ associated with a regular semisimple matrix $S$ of order $n$ and a function $h$ from $\{1,2,\dots,n\}$ to itself satisfying a certain condition.
We show that when $\Hess(S,h)$ is connected and not the entire space $\Fl(\C^n)$,
the reductive part of the identity component $\Aut^0(\Hess(S,h))$ of the automorphism group $\Aut(\Hess(S,h))$ of $\Hess(S,h)$ is an algebraic torus of dimension $n-1$
and $\Aut(\Hess(S,h))/{\Aut^0(\Hess(S,h))}$ is isomorphic to a subgroup of $\mathfrak{S}_n$ or $\mathfrak{S}_n\rtimes \{\pm 1\}$,
where $\mathfrak{S}_n$ is the symmetric group of degree $n$.
As a byproduct of our argument, we show that $\Aut(X)/{\Aut^0(X)}$ is a finite group for any projective GKM manifold $X$.
\end{abstract}

\maketitle

\setcounter{tocdepth}{1}
%\tableofcontents

\section{Introduction}

Let $A$ be an $n\times n$ complex matrix and $h$ a function from $[n]=\{1,\dots, n\}$ to itself satisfying two conditions: 
\[
h(j)\ge j \text{ for any } j\in [n]\quad \text{and}\quad h(1)\le \cdots\le h(n).
\] 
We often denote $h$ as $(h(1),\dots,h(n))$ by listing the values of $h$.
The full flag variety $\Fl(\C^n)$ consists of nested sequences $V_\bullet=(V_1\subset V_2\subset \cdots\subset V_n)$ of linear subspaces of $\C^n$ such that $\dim_\C V_i=i$ for $ i\in [n]$, and a Hessenberg variety $\Hess(A,h)$ is a subvariety of $\Fl(\C^n)$ defined by 
\[
\Hess(A,h)=\{V_\bullet \in \Fl(\C^n)\mid AV_i\subset V_{h(i)} \quad(\forall i\in [n])\}
\]
where $A$ is regarded as a linear operator on $\C^n$.
The family of Hessenberg varieties contains important varieties such as Springer fibers, Peterson varieties, and permutohedral varieties.
See \cite{AH} for recent development on Hessenberg varieties.

Let $S$ be an $n\times n$ complex matrix with distinct eigenvalues.
Such a matrix is called regular semisimple and the corresponding Hessenberg variety $\Hess(S,h)$ is also called regular semisimple.
It is known in \cite{de-pr-sh92} that $\Hess(S,h)$ is a smooth projective variety of complex dimension $\sum_{j=1}^n(h(j)-j)$ and it is connected if and only if $h(j)\ge j+1$ for any $j\in [n-1]$.
Note that $\Hess(S,h)=\Fl(\C^n)$ if and only if $h=(n,\dots,n)$, which immediately follows from the definition of $\Hess(S,h)$.

In this paper, we are concerned with the automorphism group $\Aut(\Hess(S,h))$ of $\Hess(S,h)$ as a variety.
The action of the general linear group $\GL_n(\C)$ on $\C^n$ induces an action on $\Fl(\C^n)$ and the subgroup $\C^*$ consisting of nonzero scalar matrices in $\GL_n(\C)$ acts trivially on $\Fl(\C^n)$.
The induced action of $\mathrm{PGL}_n(\C)=\GL_n(\C)/\C^*$ on $\Fl(\C^n)$ is effective and it is known that $\Aut(\Fl(\C^n))=\mathrm{PGL}_n(\C)\rtimes\{\pm 1\}$ (\cite[Theorems 2 and 3 in pp.75--76]{akhi95}).

The restricted action of the maximal torus $\T$ of $\GL_n(\C)$ commuting with the regular semisimple matrix $S$ leaves $\Hess(S,h)$ invariant and the induced action of the quotient group $\T/\C^*$ on $\Hess(S,h)$ is effective when $\Hess(S,h)$ is connected.
Therefore, we may think of $\T/\C^*$ as a subgroup of the identity component $\Aut^0(\Hess(S,h))$ of $\Aut(\Hess(S,h))$ when $\Hess(S,h)$ is connected.
It follows from a general result on automorphism groups (\cite[Corollary 2.19]{brio18}) that $\Aut^0(\Hess(S,h))$ is a linear algebraic group, so it is a semidirect product of its reductive part and unipotent part.

As is well-known, $\Hess(S,h)$ with this torus action is what is called a GKM manifold.
We denote the GKM graph associated with $\Hess(S,h)$ by $\mathcal{G}_h$.
One can see that the automorphism group $\Aut(\mathcal{G}_h)$ of $\mathcal{G}_h$ is isomorphic to $\mathfrak{S}_n\rtimes \{\pm 1\}$ or $\mathfrak{S}_n$, where $\mathfrak{S}_n$ denotes the symmetric group of degree $n$.

Our main result is the following.

\begin{theorem} \label{theo:main}
Let $\Hess(S,h)$ be connected and not $\Fl(\C^n)$.
Then 
\begin{enumerate}
\item the reductive part of $\Aut^0(\Hess(S,h))$ is $\T/\C^*$, and
\item $\pi_0{\Aut(\Hess(S,h))}=\Aut(\Hess(S,h))/{\Aut^0(\Hess(S,h))}$ is isomorphic to a subgroup of $\Aut(\mathcal{G}_h)$.
\end{enumerate}
\end{theorem}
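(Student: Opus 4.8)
The plan is to study $X:=\Hess(S,h)$ through the normalizer of the torus $\T/\C^*$ inside $\Aut(X)$ and through the combinatorics of the GKM graph $\mathcal G_h$. Since $\Aut^0(X)$ is a connected linear algebraic group, write $\Aut^0(X)=R\ltimes R_u$ with $R$ reductive and $R_u$ its unipotent radical, and enlarge $\T/\C^*$ to a maximal torus $T'$ of $R$ (hence of $\Aut^0(X)$). The crucial preliminary claim is that $T'=\T/\C^*$, i.e. $\T/\C^*$ \emph{is already a maximal torus of} $\Aut^0(X)$. Granting this: $\Aut^0(X)$ is normal in $\Aut(X)$, so for any $\psi\in\Aut(X)$ the torus $\psi(\T/\C^*)\psi^{-1}$ is a maximal torus of $\Aut^0(X)$ and hence conjugate to $\T/\C^*$ by some element of $\Aut^0(X)$; thus every connected component of $\Aut(X)$ meets the normalizer $N:=N_{\Aut(X)}(\T/\C^*)$, and the inclusion $N\hookrightarrow\Aut(X)$ induces an isomorphism $N/\!\left(N\cap\Aut^0(X)\right)\xrightarrow{\ \sim\ }\pi_0\Aut(X)$. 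Both parts of the theorem will be read off from this.

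For the preliminary claim, note that $T'$ is connected and $X^{\T}$ is finite, so $T'$ fixes $X^{\T}$ pointwise and $X^{T'}=X^{\T}$; in particular the $T'$-action shares the isolated fixed points of the $\T$-action, and at each of them the isotropy representation of $T'$ is faithful. Restricting $T'$-weights to $\T$ gives the (pairwise non-proportional) tangent weights recorded by $\mathcal G_h$, so the $T'$-weights are themselves pairwise non-proportional and $X$ is GKM for $T'$ on the same underlying graph, with an axial function refining the standard one $\alpha$ of $\mathcal G_h$. The claim then reduces to a rigidity property of $\mathcal G_h$: any axial function on this graph refining $\alpha$ (whose values lie in the rank $n-1$ lattice $\mathfrak t^*$) already takes values, up to a global linear automorphism, in $\mathfrak t^*$; faithfulness then forces $\dim T'=n-1$. \emph{I expect this rigidity statement to be the principal obstacle.} It must be extracted from the explicit structure of $\mathcal G_h$ — vertex set $\mathfrak S_n$, edges labelled by $\pm(t_i-t_j)$, and the connection along edges — and it is here that the connectedness hypothesis $h(j)\ge j+1$ enters, guaranteeing enough edges at each vertex to pin down $\alpha$.

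To prove (2): every $\psi\in N$ permutes $X^{\T}=V(\mathcal G_h)$, carries $\T$-invariant $2$-spheres to $\T$-invariant $2$-spheres and thus permutes $E(\mathcal G_h)$, and acts on the character lattice of $\T/\C^*$ by conjugation on $\T$; because $\psi$ is a biregular automorphism these actions are compatible with the axial function and the connection, so we obtain a homomorphism $\rho\colon N\to\Aut(\mathcal G_h)$. An element of $\ker\rho$ fixes every vertex, stabilizes every edge, and induces the identity on characters, hence centralizes $\T$ and fixes $X^{\T}$ pointwise; a local computation at a fixed point $p$ (there $\ker\rho$ commutes with $\T$, preserves the distinct $\T$-weight lines in $T_pX$, and acts on each by a scalar, exactly as $\T/\C^*$ does) shows $\ker\rho\subseteq\Aut^0(X)$. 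Therefore $\rho$ descends to an injection
\[
\pi_0\Aut(X)=N/\!\left(N\cap\Aut^0(X)\right)\hookrightarrow\Aut(\mathcal G_h),
\]
which is (2); since $\mathcal G_h$ is a finite graph this also yields the finiteness of $\pi_0\Aut(X)$ (and the analogous argument for an arbitrary projective GKM manifold gives the byproduct announced in the abstract).

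To prove (1): by the preliminary claim $R$ is connected reductive with maximal torus $\T/\C^*$ of dimension $n-1$, so it only remains to show the semisimple part of $R$ is trivial. If not, $R$ contains a root subgroup $U_\gamma\cong\mathbb G_a$ normalized by $\T/\C^*$ with nonzero weight $\gamma$, and together with $U_{-\gamma}$ it generates a subgroup $G_\gamma$ isogenous to $\mathrm{SL}_2$ that acts nontrivially on $X$. Since $\T$ normalizes $G_\gamma$ and fixes each point of $X^{\T}$, for any $p\in X^{\T}$ not fixed by $G_\gamma$ the closure of $G_\gamma\cdot p$ is a positive-dimensional $\T$-invariant rational subvariety on which $G_\gamma$ acts nontrivially and on which $\T$ acts both through $\gamma$ and through standard weights $\pm(t_i-t_j)$; letting $p$ vary and using the connection of $\mathcal G_h$ forces $\gamma$, up to sign and scale, to be a standard root and $G_\gamma$ to act on $X$ the way the corresponding $\mathrm{SL}_2\subset\mathrm{PGL}_n(\C)$ acts on $\Fl(\C^n)$ — impossible when $X$ is a \emph{proper} Hessenberg subvariety, since that one-parameter family of automorphisms does not preserve the condition $AV_i\subset V_{h(i)}$ unless $h=(n,\dots,n)$. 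Hence $R=\T/\C^*$. Making this last incompatibility precise — identifying exactly which $\T$-invariant subvarieties a hypothetical $G_\gamma$ must create, and why $\mathcal G_h$ cannot accommodate them — together with the rigidity statement above are the two technical points where real work is needed, and they are precisely where the hypotheses $h(j)\ge j+1$ and $\Hess(S,h)\ne\Fl(\C^n)$ are used.
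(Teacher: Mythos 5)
Your architecture (pass to the normalizer of $\T/\C^*$, map it to $\Aut(\mathcal G_h)$, analyze the kernel) matches the paper's, but the two places you yourself flag as ``where real work is needed'' are exactly where the proof lives, and for one of them your proposed route is unlikely to close. The rigidity/maximality claim is genuine and is proved in the paper by an induction over the pairs $(i,j)$ with $j<i\le h(j)$: the transpositions $(i,j),(i-1,j),(i,i-1)$ span a sub-GKM-graph with underlying graph $K_{3,3}$, which is the graph of a $T$-invariant copy of $\Fl(\C^3)$ inside $\Hess(S,h)$; since $\Aut(\Fl(\C^3))=\mathrm{PGL}_3(\C)\rtimes\{\pm1\}$ has rank $2$ (equivalently, $K_{3,3}$ is non-planar so cannot be the $1$-skeleton of a $3$-dimensional quotient), the label of $(i,j)$ is forced into the span of the labels of the two smaller transpositions. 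More seriously, your argument for killing the semisimple part of $R$ rests on the unproved assertion that a root subgroup $U_\gamma$ normalized by $\T/\C^*$ must generate an $\mathrm{SL}_2$ acting ``the way the standard $\mathrm{SL}_2\subset\mathrm{PGL}_n(\C)$ acts''; nothing in your orbit-closure sketch forces this, and the paper does not argue this way (its Section 4 computation about $U_{ij}$ not preserving $\Hess(S,h)$ is explicitly only ``supporting evidence''). The mechanism you are missing is representation-theoretic: if $G\supseteq T$ is connected compact non-abelian, any $f\in N_G(T)$ acts trivially on $H^*(\Hess(S,h))$ (it lies in a connected group) while inducing an automorphism of $(\Gamma_h,\alpha_h)$ whose action on cohomology is Tymoczko's dot action of some $\sigma\in\mathfrak S_n$ (possibly composed with the involution $\Phi_0$); since $H^2(\Hess(S,h))$ contains the standard $\mathfrak S_n$-module when $\Hess(S,h)\ne\Fl(\C^n)$ (\cite[Corollary 6.3]{AMS}), only $\sigma=\mathrm{id}$ acts trivially, whence $N_G(T)=T$ and $G=T$. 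Without this input you cannot rule out the non-abelian reductive part, nor even the case $h=(n,\dots,n)$ where the conclusion is false.

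The same omission creates two gaps in your part (2). First, for $\rho$ to descend to $N/(N\cap\Aut^0(X))$ you need $N\cap\Aut^0(X)\subseteq\ker\rho$; elements of $\Aut^0(X)$ normalizing $\T/\C^*$ need not fix the graph pointwise a priori --- one again needs that the only graph automorphism acting trivially on $H^*(\Gamma_h,\alpha_h)\otimes\Q$ is the identity. Second, your ``local computation'' at a fixed point only shows that an element of $\ker\rho$ centralizes $\T/\C^*$ and acts by scalars on the weight lines of $T_pX$; it does not put that element in $\Aut^0(X)$, since it could generate with $T$ a group of the form $T\times K$ with $K$ finite and nontrivial. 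The paper excludes this by upgrading the $\Fl(\C^3)$ rank argument to $p$-toral rank (Remark 3.10(2)): no elementary abelian $p$-group in $\Aut(\Hess(S,h))$ properly contains the $p$-torus of rank $n-1$ in $T$. So the proposal correctly draws the skeleton but omits the two load-bearing inputs --- the $K_{3,3}/\Fl(\C^3)$ rank computation and the dot-action/standard-module theorem --- and substitutes for the second an $\mathrm{SL}_2$-classification step that is not justified.
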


\begin{remark} \label{rema:1}
(1) When $h=(2,3,\dots,n,n)$, $\Hess(S,h)$ is a toric variety called the permutohedral variety.
In this case, $\Aut^0(\Hess(S,h))=\T/\C^*$ (so the unipotent part of $\Aut^0(\Hess(S,h))$ is trivial) and $\pi_0{\Aut(\Hess(S,h))}$ is isomorphic to the entire group $\Aut(\mathcal{G}_h)$, which follows from Demazure's result on $\Aut(X)$ for a compact smooth toric variety $X$ (\cite[\S 3.4]{oda88}).

(2) The other extreme case where $h=(n-1,n,\dots,n)$ was recently studied by Brosnan et al.~(\cite{br-so24}).
They determine $\Aut(\Hess(S,h))$ in this extreme case.
In particular, they show that when $h=(n-1,n,\dots,n)$ and $n\ge 4$, $\Aut(\Hess(S,h))$ depends on $S$ but $\Aut^0(\Hess(S,h))=\T/\C^*$ regardless of $S$, and that $\pi_0{\Aut(\Hess(S,h))}$ is a proper subgroup of $\Aut(\mathcal{G}_h)$.

(3) When $\Hess(S,h)$ is disconnected, each connected component is isomorphic to each other and a product of smaller regular semisimple Hessenberg varieties.
Since $\Aut^0(X\times Y)\cong \Aut^0(X)\times \Aut^0(Y)$ for any complete varieties $X$ and $Y$ (\cite[Corollary 4.2.7]{br-sa-um13}), one can find the reductive part of $\Aut^0(\Hess(S,h))$ even when $\Hess(S,h)$ is disconnected.
\end{remark}

The idea of the proof of Theorem~\ref{theo:main} is as follows.
Since $\Hess(S,h)$ is a GKM manifold, its (equivariant) cohomology agrees with the (equivariant) graph cohomology of the associated GKM graph $\mathcal{G}_h$ (\cite{go-ko-ma98, gu-za01}).
Let $T$ be the maximal compact subgroup of $\T/\C^*$.
It can be thought of as a subgroup of $\Aut^0(\Hess(S,h))$ when $\Hess(S,h)$ is connected.

Our proof for statement (1) consists of two steps.
The first step is to show that $T$ is maximal among compact tori in $\Aut^0(\Hess(S,h))$.
{We reduce the proof} to the fact that $\Aut^0(\Fl(\C^3))$ is of rank $2$.
The second step is to show that if $G$ is a connected compact Lie subgroup of $\Aut^0(\Hess(S,h))$ containing $T$, then $G=T$.
To be precise, since an element of the normalizer $N_G(T)$ of $T$ in $G$ provides a weakly $T$-equivariant diffeomorphism of $\Hess(S,h)$, it induces an automorphism of the GKM graph $\mathcal{G}_h$, so that we obtain a homomorphism
\begin{equation} \label{eq:Psi}
\Psi\colon N_G(T)\to \Aut(\mathcal{G}_h)
\end{equation}
such that $T\subset \ker\Psi$.
The action of $N_G(T)$ on $H^*(\Hess(S,h))$ is trivial since it is contained in the connected group $\Aut^0(\Hess(S,h))$.
On the other hand, one can determine the group $\Aut(\mathcal{G}_h)$ and it turns out that the action of $\Aut(\mathcal{G}_h)$ on $H^*(\Hess(S,h))$ is essentially Tymoczko's dot action (\cite{tymo08}).
Then, it follows from \cite[Corollary 6.3]{AMS} or \cite[Theorem 6.3]{ki-le24} that the action of a non-identity element of $\Aut(\mathcal{G}_h)$ on $H^*(\Hess(S,h))$ is nontrivial when $\Hess(S,h)\not=\Fl(\C^n)$.
This means that $N_G(T)\subset \ker\Psi$ and it implies $N_G(T)=T$ where we use the compactness of $G$.
Statement (1) in Theorem~\ref{theo:main} follows from this fact.

As for statement (2) in Theorem~\ref{theo:main}, we first prove that $\pi_0{\Aut(\Hess(S,h))}$ is a finite group.
Then, a maximal compact subgroup $G$ of $\Aut(\Hess(S,h))$ with $T$ as the identity component is 
the normalizer of $T$ in $\Aut(\Hess(S,h))$ and $G/T$ is isomorphic to $\pi_0{\Aut(\Hess(S,h))}$.
The homomorphism $\Psi$ in \eqref{eq:Psi} is defined for such $G$ and one can see $\ker\Psi=T$.
This implies statement (2).

For a projective manifold $X$ (i.e.~ smooth projective variety), $\pi_0{\Aut(X)}$ can be an infinite group (see \cite{brio18}).
Our argument for the finiteness of $\pi_0{\Aut(\Hess(S,h))}$ works for any projective GKM manifold and we obtain the following.

\begin{theorem} \label{theo:main2}
If $X$ is a projective GKM manifold, then $\pi_0{\Aut(X)}$ is a finite group.
\end{theorem}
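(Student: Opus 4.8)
The plan is to bound $\pi_0(\Aut(X))$ through the natural action of $\Aut(X)$ on $\mathrm{Pic}(X)$, analyzing separately the kernel and the image of that action; the GKM hypothesis enters only to constrain the geometry of $X$. First I would record its consequences. Choosing a generic one-parameter subgroup of the algebraic torus $T$ acting on $X$ (generic meaning it has the same, finite, fixed-point set), the Bialynicki--Birula decomposition presents $X$ as a cellular variety whose cells are affine spaces; hence $H^{\mathrm{odd}}(X;\Z)=0$, $H^{\mathrm{even}}(X;\Z)$ is free of finite rank, and $H^{*}(X;\C)$ is of Hodge--Tate type. In particular $H^1(X,\mathcal O_X)=H^2(X,\mathcal O_X)=0$, so the first Chern class identifies $\mathrm{Pic}(X)$ with $H^2(X;\Z)$; thus $\mathrm{Pic}(X)$ is free of finite rank, $\mathrm{Pic}^0(X)=0$, and numerical equivalence of divisors agrees with linear equivalence. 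Since a connected group acts trivially on the discrete group $\mathrm{Pic}(X)$, the representation $r\colon\Aut(X)\to\GL(\mathrm{Pic}(X))$ satisfies $\Aut^0(X)\subseteq\ker r$.

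Next I would show that $\ker r$ is a finite extension of $\Aut^0(X)$. Fix an ample line bundle $L$ on $X$. Every $g\in\ker r$---and every $g\in\Aut^0(X)$---fixes the class of $L$, hence $g^{*}L\cong L$ because $\mathrm{Pic}^0(X)=0$; thus $\ker r$ and $\Aut^0(X)$ are contained in $\Aut(X,L)$, which is a linear algebraic group since it embeds into $\mathrm{PGL}\big(H^0(X,L^{\otimes m})\big)$ for $m\gg0$ preserving the image of $X$. Therefore $\ker r$ is an algebraic group of finite type with identity component $\Aut^0(X)$, so $\ker r/\Aut^0(X)=\pi_0(\ker r)$ is finite; in passing this re-proves that $\Aut^0(X)$ is linear algebraic.

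Then I would show that $r(\Aut(X))$ is finite, and here the torus action is indispensable. Pullback of divisors preserves nefness, so $r(\Aut(X))$ preserves the nef cone of $X$, equivalently---under the intersection pairing---the Mori cone $\overline{NE}(X)\subset N_1(X)_\R$. The crucial point is that $\overline{NE}(X)$ is rational polyhedral, being generated by the classes of the finitely many one-dimensional $T$-orbit closures (finitely many because $X^{T}$ is finite). Indeed, taking one-parameter subgroups $\lambda_1,\dots,\lambda_r$ that generate $T$ and an irreducible curve $C=C^{(0)}\subset X$, the limit cycles $C^{(k)}=\lim_{t\to 0}\lambda_k(t)\cdot C^{(k-1)}$ in the Chow variety exist by properness and have the same numerical class as $C$; after $r$ steps, $C^{(r)}$ is invariant under $\lambda_1,\dots,\lambda_r$, hence under $T$, hence is a non-negative combination of one-dimensional $T$-orbit closures. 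As $\overline{NE}(X)$ is then a full-dimensional pointed rational polyhedral cone, $r(\Aut(X))$ permutes its finitely many extremal rays, and an element fixing each ray fixes their primitive generators, which span $N_1(X)_\R$; hence $r(\Aut(X))$ injects into a finite symmetric group. Combining the two steps, $\pi_0(\Aut(X))$ is an extension of the finite group $r(\Aut(X))$ by the finite group $\ker r/\Aut^0(X)$, and therefore finite.

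The step I expect to be the main obstacle is the rational polyhedrality of $\overline{NE}(X)$---equivalently, that effective curves are numerically generated by the $T$-invariant ones. This is precisely where the GKM hypothesis cannot be relaxed to ``projective manifold with vanishing odd cohomology'': a rational elliptic surface has torsion-free Hodge--Tate cohomology concentrated in even degrees, yet has infinite $\pi_0(\Aut)$ and a non-polyhedral Mori cone. I would make the degeneration argument precise---using that translation by a connected group preserves numerical equivalence, that the numerical class is locally constant on the Chow variety, and that a limit of effective cycles is effective---or else invoke the known description of the cone of curves of a smooth projective variety carrying a torus action with isolated fixed points.
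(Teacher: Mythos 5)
Your proof is essentially correct, but it takes a genuinely different route from the paper's. Both arguments share the same skeleton --- study the representation of $\Aut(X)$ on $H^2$, show the kernel is a finite extension of $\Aut^0(X)$, and show the image is finite --- but they diverge in both halves. For the kernel, the paper simply cites the Lieberman--Fujiki-type fact (via Cantat/Brion) that $\Aut^0(X)$ has finite index in $\ker\bigl(\Aut(X)\to \GL(H^2(X))\bigr)$ for any projective manifold; you reprove this in the GKM setting by first using the Bialynicki--Birula decomposition to get $\mathrm{Pic}(X)\cong H^2(X;\Z)$ and $\mathrm{Pic}^0(X)=0$, and then embedding the stabilizer of an ample class into a $\mathrm{PGL}_N$. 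For the image, the paper first reduces $\pi_0\Aut(X)$ to $N(T)/N^0(T)$ by conjugacy of maximal compact tori and then maps $N(T)$ into the automorphism group of the GKM graph, which is finite because the axial function has full rank (Lemma~\ref{lemm:Aut_graph_finite}); you instead show the image preserves the Mori cone $\overline{NE}(X)$, which is rational polyhedral because every curve class degenerates (via iterated one-parameter-subgroup limits in the Chow variety) to an effective $T$-invariant $1$-cycle supported on the finitely many invariant curves. Your route buys more algebraic-geometric information (polyhedrality of the nef cone, and an explicit finite group --- the symmetries of the cone --- containing the image), and your rational elliptic surface example correctly locates where the GKM hypothesis is indispensable; the paper's route is softer, stays entirely within the equivariant-topology framework already built, and has the advantage that the homomorphism $\Psi\colon N(T)\to\Aut(\mathcal{G}_X)$ is reused verbatim in the proof of Theorem~\ref{theo:main}(2). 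Two points you should make explicit to close the argument: the compact GKM torus must first be replaced by its Zariski closure in $\Aut^0(X)$ (a linear algebraic torus, available because $H^1(X)=0$ makes the Albanese trivial) before invoking Bialynicki--Birula or Chow-variety limits; and the limit-cycle step, while standard, needs the observations you list at the end (constancy of the numerical class in connected families, $T$-invariance of iterated limits via commutativity) spelled out, since this is exactly the step that fails without a torus action.
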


The paper is organized as follows.
In Section~\ref{sect:GKM}, we review the notion of GKM graph, define its automorphism group, and prove properties of the automorphism groups of a GKM manifold and its GKM graph.
Then we prove Theorem~\ref{theo:main2}.
In Section~\ref{sect:Hessenberg}, we determine $\Aut(\mathcal{G}_h)$ and prove (1) and (2) of Theorem~\ref{theo:main} separately.
In Section~\ref{sect:unipotent} we observe that a nontrivial unipotent subgroup of $\mathrm{GL}_n(\C)$ acting on $\Fl(\C^n)$ does not preserve $\Hess(S,h)$ when $\Hess(S,h)\not=\Fl(\C^n)$.
This provides a supporting evidence that the unipotent part of $\Aut^0(\Hess(S,h))$ would be trivial for any $\Hess(S,h)$.

Throughout the paper, all cohomology groups will be taken with integer coefficients unless otherwise stated.

\subsection*{Acknowledgements}
We would like to thank Jaehyun Hong and Eunjeong Lee for comments and informing us of their result \cite{br-so24}.
D.~Jang was supported by the National Research Foundation of Korea(NRF) grant funded by the Korea government(MSIT) (2021R1C1C1004158).
S.~Kuroki was supported by JSPS KAKENHI Grant Number 21K03262.
M.~Masuda was supported in part by JSPS Grant-in-Aid for Scientific Research 22K03292 and the HSE University Basic Research Program.
H.~Zeng was supported by NSFC:11901218.
This work was partly supported by MEXT Promotion of Distinctive Joint Research Center Program JPMXP0723833165.

\section{Automorphisms of GKM graphs and GKM manifolds} \label{sect:GKM}

\subsection{GKM graph and its automorphism}
Let $\Gamma$ be a regular finite graph with vertex set $V(\Gamma)$ and oriented edge set $E(\Gamma)$, where if $e$ is in $E(\Gamma)$ then $e$ with the opposite orientation, denoted by $\bar{e}$, is also in $E(\Gamma)$.
Let $T$ be a compact torus and $BT$ the classifying space of $T$.
Since there is a natural isomorphism between $H^2(BT)$ and the group $\Hom(T,S^1)$ of homomorphisms from $T$ to the unit circle $S^1$ of $\C$, we often think of an element of $H^2(BT)$ as a homomorphism from $T$ to $S^1$ and vice versa.
For $e\in E(\Gamma)$, we denote the initial (resp.\ terminal) vertex of $e$ by $i(e)$ (resp.\ $t(e)$).
An assignment $\alpha\colon E(\Gamma)\to H^2(BT)$ is called an axial function if it satisfies the following three conditions:
\begin{enumerate}
\item $\alpha(\bar{e})=-\alpha(e)$,
\item elements in $\Lambda_p:=\{\alpha(e)\mid i(e)=p\}$ are pairwise linearly independent for any $p\in V(\Gamma)$,
\item $\Lambda_{i(e)}\equiv \Lambda_{t(e)}\pmod{\alpha(e)}$ for any $e\in E(\Gamma)$, that is, there is a bijection between $\Lambda_{i(e)}$ and $\Lambda_{t(e)}$ such that the congruence relation holds for each corresponding pair.
\end{enumerate} 
A GKM graph is a pair $(\Gamma,\alpha)$.
We say that the axial function $\alpha$ is of full rank if the module generated by elements in $\Lambda_p$ is of full rank in $H^2(BT)$ for any $p\in V(\Gamma)$. 

\begin{definition}
An automorphism of a GKM graph $(\Gamma,\alpha)$ is a pair $(\varphi,\phi)$ such that 
\begin{enumerate}
\item $\varphi$ is a graph automorphism of $\Gamma$ and
\item $\phi$ is a group automorphism of $H^2(BT)$ 
\end{enumerate}
satisfying $\phi(\alpha(e))=\alpha(\varphi(e))$ for any $e\in E(\Gamma)$.
The set $\Aut(\Gamma,\alpha)$ of all automorphisms of $(\Gamma,\alpha)$ forms a group under composition of maps.
\end{definition}

\begin{lemma} \label{lemm:Aut_graph_finite}
$\Aut(\Gamma,\alpha)$ is a finite group if $\alpha$ is of full rank.
\end{lemma}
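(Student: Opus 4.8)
The plan is to bound the two components of an automorphism $(\varphi,\phi)$ separately, using the full-rank hypothesis to pin down $\phi$ once $\varphi$ is known. First I would observe that a GKM graph has only finitely many vertices and edges, so the group $\mathrm{Aut}(\Gamma)$ of graph automorphisms is already finite; hence there are only finitely many possibilities for the first coordinate $\varphi$. The content of the lemma is therefore that, for each fixed $\varphi$, there are only finitely many $\phi\in\mathrm{Aut}(H^2(BT))$ that can pair with it — in fact I expect at most one. So the real statement is: the projection $\mathrm{Aut}(\Gamma,\alpha)\to\mathrm{Aut}(\Gamma)$, $(\varphi,\phi)\mapsto\varphi$, has trivial (or at worst finite) kernel, and then finiteness of the total group follows.

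Next I would analyze the kernel, i.e.\ automorphisms of the form $(\mathrm{id},\phi)$. Such a $\phi$ must satisfy $\phi(\alpha(e))=\alpha(e)$ for every $e\in E(\Gamma)$. Fix any vertex $p\in V(\Gamma)$. By the full-rank assumption, the elements of $\Lambda_p=\{\alpha(e)\mid i(e)=p\}$ generate a full-rank sublattice of $H^2(BT)\cong\Z^{\dim T}$; since $\phi$ fixes each generator of a finite-index subgroup, $\phi$ fixes that subgroup pointwise, and a lattice automorphism fixing a finite-index subgroup pointwise is the identity (it is the identity after tensoring with $\Q$, hence the identity). Therefore the kernel of the projection is trivial, so $\mathrm{Aut}(\Gamma,\alpha)$ embeds into the finite group $\mathrm{Aut}(\Gamma)$ and is finite.

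More generally, the same computation shows that for a fixed $\varphi$, if $\phi_1,\phi_2$ both work, then $\phi_2\phi_1^{-1}$ fixes $\Lambda_p$ pointwise for the vertex $p$ chosen with $\varphi(p)=p$ (note $\varphi$ permutes $V(\Gamma)$, so it has a periodic point, or one may simply argue with the sublattice generated by all of $\bigcup_p\Lambda_p$ which is $\varphi$-stable and full rank), hence $\phi_1=\phi_2$; this confirms that the projection is injective. I would present the argument in the first, cleaner form: $\mathrm{Aut}(\Gamma)$ is finite because $\Gamma$ is a finite graph, and the projection $(\varphi,\phi)\mapsto\varphi$ is injective because a full-rank axial function is rigid under lattice automorphisms fixing it.

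The step I expect to require the most care is the last one — verifying that a $\Z$-lattice automorphism fixing a finite-index subgroup pointwise must be the identity — but this is elementary linear algebra over $\Z$ (pass to $\Q$-coefficients, where the fixed subspace is everything), so there is no genuine obstacle; the lemma is essentially a bookkeeping statement once the full-rank hypothesis is invoked.
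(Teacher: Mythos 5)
Your proof is correct and takes essentially the same route as the paper: both project $\Aut(\Gamma,\alpha)$ onto a finite group of combinatorial permutations and use the full-rank hypothesis to control the kernel. (The paper projects only to the permutation group of the vertex set and observes that the kernel lies in the finite group permuting labels on multiple edges, whereas you project to all of $\Aut(\Gamma)$ and show the kernel is trivial --- a slightly cleaner packaging of the same idea.)
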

\begin{proof}
Let $V$ be the set of vertices in $\Gamma$.
Then we have a natural homomorphism from $\Aut(\Gamma,\alpha)$ to the permutation group $\mathfrak{S}(V)$ on $V$.
The kernel of this homomorphism is contained in the permutation group $P$ of labels on multiple edges.
Since $\mathfrak{S}(V)$ and $P$ are both finite groups, the lemma follows.
\end{proof}

\subsection{Almost complex $T$-manifold}
Let $M$ be a compact connected almost complex manifold with an action of $T$ preserving the almost complex structure on $M$.
We assume that 
\begin{enumerate}
\item[(A1)] $M^T$ is isolated and weights at the tangential $T$-module $T_pM$ are pairwise linearly independent for any $p\in M^T$.
\end{enumerate}
To such $M$, we associate a GKM graph $(\Gamma_M,\alpha_M)$ as follows.
First the vertex set $V(\Gamma_M)$ is $M^T$.
If $\rho\in H^2(BT)=\Hom(T,S^1)$ is a weight in the complex $T$-module $T_pM$, then the fixed point set component of $\ker\rho$ containing $p$ is a 2-sphere $S_\rho$ by assumption (A1) above.
Therefore, the $T$-action on $S_\rho$ has two fixed points, one is $p$ and the other one, say $q$.
We think of $S_\rho$ as an edge joining $p$ and $q$, and if $e$ is the edge with orientation from $p$ to $q$, then we assign $\rho$ to $e$ and $-\rho$ to $\bar{e}$.
In this way, we obtain the set $E(\Gamma_M)$ of oriented edges and the axial function $\alpha_M$.
An oriented edge $e$ emanating from $p$ may be thought of as an eigenspace of the complex $T$-module $T_pM$, and $\alpha_M(e)$ is the weight of the eigenspace.
{
\begin{remark} \label{rema:finite_group}
When the $T$-action on $M$ is effective, the axial function $\alpha_M$ is of full rank and $\Aut(\Gamma_M,\alpha_M)$ is a finite group by Lemma~\ref{lemm:Aut_graph_finite}.
\end{remark}}  
Let $\Aut(M)$ be the group of all diffeomorphisms of $M$ preserving the almost complex structure on $M$.
By Kobayashi's theorem $\Aut(M)$ is a Lie group (\cite[Corollary 4.2 in p.19]{koba95}).
{We say that $f\in \Aut(M)$ is weakly $T$-equivariant with respect to $\gamma\in \Aut(T)$ if $f(tx)=\gamma(t)f(x)$ for any $t\in T$ and $x\in M$, where $\Aut(T)$ denotes the group of all group automorphisms of $T$.} 

\begin{lemma} \label{lemm:1}
If $f\in \Aut(M)$ is weakly $T$-equivariant with respect to $\gamma\in \Aut(T)$, then $f$ induces an automorphism $(\varphi, \phi)$ of $(\Gamma_M,\alpha_M)$.
Indeed, $\varphi$ on $V(\Gamma_M)=M^T$ is the restriction of $f$ to $M^T$ and $\phi$ is the automorphism $(\gamma^{-1})^*$ of $H^2(BT)=\Hom(T,S^1)$ induced from $\gamma^{-1}$.
\end{lemma}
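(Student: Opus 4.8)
The plan is to unwind the definitions and show that the pair $(\varphi,\phi)$ described in the statement actually satisfies the two conditions for being a GKM-graph automorphism, namely that $\varphi$ is a graph automorphism of $\Gamma_M$ compatible with the axial function via $\phi$. First I would observe that since $f$ is weakly $T$-equivariant with respect to $\gamma$, it maps $T$-orbits to $T$-orbits and in particular carries $M^T$ to $M^T$ bijectively: if $tp=p$ for all $t\in T$, then $tf(p)=f(\gamma^{-1}(t)p)=f(p)$, so $f(p)\in M^T$, and the same argument applied to $f^{-1}$ (which is weakly $T$-equivariant with respect to $\gamma^{-1}$) gives bijectivity. Thus $\varphi:=f|_{M^T}$ is a well-defined bijection of $V(\Gamma_M)$.

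Next I would analyze what $f$ does to the invariant $2$-spheres $S_\rho$ that constitute the edges. Fix $p\in M^T$ and a weight $\rho$ of $T_pM$; the differential $df_p:T_pM\to T_{f(p)}M$ is an isomorphism of real vector spaces intertwining the $T$-action on $T_pM$ with the $\gamma$-twisted $T$-action on $T_{f(p)}M$ (because $f(tx)=\gamma(t)f(x)$). Hence if $V_\rho\subset T_pM$ is the eigenspace with weight $\rho$, its image $df_p(V_\rho)$ is the eigenspace of $T_{f(p)}M$ with weight $\rho\circ\gamma^{-1}=(\gamma^{-1})^*(\rho)=\phi(\rho)$. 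Since weights at $f(p)$ are pairwise linearly independent by (A1) and $\phi$ is an automorphism (so it sends the weight system at $p$ bijectively to that at $f(p)$), the sphere $S_\rho$ — which is the fixed component of $\ker\rho$ through $p$ — is carried by $f$ to the fixed component of $\ker(\phi(\rho))$ through $f(p)$, i.e.\ to $S_{\phi(\rho)}$. Therefore $f$ maps the edge $e$ of $\Gamma_M$ at $p$ labeled $\rho$ to the edge $\varphi(e)$ at $f(p)$ labeled $\phi(\rho)$, which is exactly the relation $\alpha_M(\varphi(e))=\phi(\alpha_M(e))$; compatibility with orientation reversal follows since $f$ sends the other endpoint of $S_\rho$ to the other endpoint of $S_{\phi(\rho)}$. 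Finally I would note that $\phi=(\gamma^{-1})^*$ is a group automorphism of $H^2(BT)=\Hom(T,S^1)$ because $\gamma^{-1}\in\Aut(T)$, so both conditions in the definition of $\Aut(\Gamma_M,\alpha_M)$ are met.

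The main obstacle, and the only place requiring genuine care rather than bookkeeping, is the identification of the image of an eigenspace under $df_p$ with the correct weight space at $f(p)$ — that is, making precise the statement that a weakly $T$-equivariant diffeomorphism induces on tangent spaces a $\gamma$-twisted isomorphism of $T$-representations, and then reading off that the twist on weights is precisely $(\gamma^{-1})^*$ rather than $\gamma^*$. One has to differentiate the relation $f\circ t = \gamma(t)\circ f$ at the fixed point $p$ to get $df_p\circ (dt)_p = d(\gamma(t))_{f(p)}\circ df_p$ as linear maps $T_pM\to T_{f(p)}M$, i.e.\ $df_p$ conjugates the $T$-action $t\mapsto (dt)_p$ into the action $t\mapsto d(\gamma(t))_{f(p)}$; a vector in $V_\rho$ on which $T$ acts by $\rho(t)$ then maps to a vector on which $T$ acts by $\rho(\gamma^{-1}(t))$ in the \emph{untwisted} action at $f(p)$, giving weight $\rho\circ\gamma^{-1}$. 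Everything else — bijectivity of $\varphi$, the edge correspondence, the orientation compatibility — is then a direct consequence, so the lemma follows.
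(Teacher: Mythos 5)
Your proposal is correct and follows essentially the same route as the paper: differentiate the relation $f\circ t=\gamma(t)\circ f$ at a fixed point, track the eigenspace decomposition of the tangent module, and read off that the weight transforms by $(\gamma^{-1})^*$. The only point worth making explicit is that $df_p$ is $\C$-linear because $f\in\Aut(M)$ preserves the almost complex structure — this is what pins down the sign of the weight (i.e.\ distinguishes $\rho\circ\gamma^{-1}$ from its negative) and is exactly where the paper invokes the almost complex structure as well.
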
 

\begin{proof}
Since $f$ is a weakly $T$-equivariant diffeomorphism of $M$ preserving the almost complex structure on $M$, the differential of $f$ at $p\in M^T$ sends eigenspaces of $T_pM$ to eigenspaces of $T_{f(p)}M$.
Therefore, $f$ induces a graph automorphism $\varphi$ of $\Gamma_M$.

We shall show that $\phi=(\gamma^{-1})^*$.
Let $p\in M^T=V(\Gamma_M)$ and $v\in T_pM$ a vector in the eigenspace corresponding to an edge $e$ emanating from $p$.
We denote $\alpha_M(e)$ by $\alpha_e$.
Then we have 
\begin{equation} \label{eq:1}
t_* (v)=\alpha_e(t)v\qquad \text{for any $t\in T$}
\end{equation}
where $t_*$ denotes the differential of $t$ (regarded as a diffeomorphism of $M$) at $p$.
Applying the differential $f_*$ of $f$ at $p$ to the both sides of \eqref{eq:1}, we obtain
\begin{equation} \label{eq:2}
f_*(t_*(v))=\alpha_e(t)f_*(v)
\end{equation}
(note that $\alpha_e(t)$ is a complex number).
Here, since $f(tx)=\gamma(t)f(x)$ and $f_*(v)\in T_{f(p)}M$ belongs to the eigenspace corresponding to $\varphi(e)$, we have 
\begin{equation} \label{eq:3}
f_*(t_*(v))=\gamma(t)_*(f_*(v))={\alpha_{\varphi(e)}}(\gamma(t))f_*(v).
\end{equation}
Since $f_*\colon T_pM\to T_{f(p)}M$ is an isomorphism, it follows from \eqref{eq:2} and \eqref{eq:3} that
\[
\alpha_e(t)=\alpha_{\varphi(e)}(\gamma(t)), \quad\text{equivalently}\quad \alpha_e(\gamma^{-1}(t))=\alpha_{\varphi(e)}(t).
\]
Since $\gamma^*(\beta)(t)=\beta(\gamma(t))$ for $\beta\in \Hom(T,S^1)$ by definition, the latter identity above implies 
\[
\alpha_{\varphi(e)}=(\gamma^{-1})^*(\alpha_e), 
\]
proving the desired identity $\phi=(\gamma^{-1})^*$.
\end{proof}

The $T$-action on $M$ can be regarded as a homomorphism 
\begin{equation*} 
T\to \Aut(M).
\end{equation*}
When the $T$-action is effective, the homomorphism above is injective and we think of the torus $T$ as a subgroup of $\Aut(M)$.

\begin{lemma} \label{lemm:2}
Let $f\in \Aut(M)$ be weakly $T$-equivariant.
Suppose that the $T$-action on $M$ is effective and
\begin{enumerate}
\item the automorphism of $(\Gamma_M,\alpha_M)$ induced from $f$ is the identity, 
\item $f$ and $T$ are contained in a compact Lie subgroup of $\Aut(M)$.
\end{enumerate}
Then $f$ is $T$-equivariant, in other words, $f$ commutes with every element of $T$ in $\Aut(M)$.
\end{lemma}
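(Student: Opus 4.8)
The plan is to identify the torus automorphism that witnesses the weak equivariance of $f$ and to show that it is trivial. By definition there is $\gamma\in\Aut(T)$ with $f(tx)=\gamma(t)f(x)$ for all $t\in T$ and $x\in M$, equivalently $f\circ t\circ f^{-1}=\gamma(t)$ inside $\Aut(M)$. (Since the $T$-action is effective, $\gamma$ is in fact the unique such automorphism, namely conjugation by $f$: if $\gamma'$ also worked, then $\gamma(t)\gamma'(t)^{-1}\in T$ would act trivially on $M$ because $f$ is surjective, forcing $\gamma=\gamma'$.) Thus the lemma amounts to showing $\gamma=\mathrm{id}_T$, for then $f\circ t\circ f^{-1}=t$ for every $t\in T$, which is precisely the statement that $f$ commutes with $T$.

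To prove $\gamma=\mathrm{id}_T$ I would invoke Lemma~\ref{lemm:1}: the automorphism of $(\Gamma_M,\alpha_M)$ induced by $f$ is the pair $(\varphi,\phi)$ with $\varphi=f|_{M^T}$ and $\phi=(\gamma^{-1})^*$ on $H^2(BT)=\Hom(T,S^1)$. Hypothesis (1) says this pair is the identity, so in particular $\phi=(\gamma^{-1})^*=\mathrm{id}$. Unwinding the definition, $(\gamma^{-1})^*(\beta)=\beta$ for every character $\beta\in\Hom(T,S^1)$ means $\beta(\gamma^{-1}(t))=\beta(t)$ for all $t\in T$ and all $\beta$; since the characters of a compact torus separate its points, this forces $\gamma^{-1}=\mathrm{id}_T$, hence $\gamma=\mathrm{id}_T$, and combining with the previous paragraph we are done.

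Almost all of the substance has been front-loaded into Lemma~\ref{lemm:1} and into the dictionary between $M$ and $(\Gamma_M,\alpha_M)$, so the remaining argument is short; the one genuinely load-bearing step is the last one, where faithfulness of the character functor on compact tori converts the combinatorial statement ``the induced GKM-graph automorphism is trivial'' into the geometric statement ``$f$ centralizes $T$''. Hypothesis (2) is what keeps the setup legitimate: it puts $f$ and $T$ in a common compact Lie subgroup of $\Aut(M)$, which guarantees that $f$ normalizes $T$ and that conjugation by $f$ is an honest (continuous) automorphism of $T$, so that Lemma~\ref{lemm:1} applies with a genuine $\gamma\in\Aut(T)$; it is also the form in which this lemma is later fed into the main argument, where $f$ ranges over $N_G(T)$ for a compact $G$. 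I do not expect a real obstacle beyond carefully checking that $\gamma$ is well defined and that triviality of the induced GKM-graph automorphism is being read as triviality of both $\varphi$ and $\phi$.
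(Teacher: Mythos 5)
Your proof is correct, but it takes a genuinely different route from the paper's. The paper exploits the \emph{graph} half of hypothesis (1): since $\varphi=f|_{M^T}$ is the identity, $f$ fixes every fixed point and preserves each one-dimensional eigenspace of $T_pM$, so $f_*$ acts by scalars there and commutes with $t_*$; hence the commutator $h=(f\circ t)\circ(t\circ f)^{-1}$ fixes $p$ with $h_*=\mathrm{id}$ on $T_pM$, and hypothesis (2) is then used (via a $G$-invariant metric and the exponential map, cf.\ Remark~\ref{rema:h=id}) to promote this to $h=\mathrm{id}$ on all of $M$. You instead exploit the \emph{character} half of hypothesis (1): by Lemma~\ref{lemm:1} the second component of the induced automorphism is $\phi=(\gamma^{-1})^*$, so $\phi=\mathrm{id}$ together with the fact that the characters $\Hom(T,S^1)$ separate points of the compact torus forces $\gamma=\mathrm{id}$, and then $f\circ t\circ f^{-1}=\gamma(t)=t$ directly. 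Both arguments are valid; yours is shorter and, as you essentially notice, does not use hypothesis (2) at all (your attempt to assign it a role in making $\gamma$ well defined is unnecessary, since weak equivariance already supplies $\gamma\in\Aut(T)$ and effectiveness makes it unique). The trade-off is that the paper's proof consumes only the combinatorial datum $\varphi=\mathrm{id}$ — which in the effective (full-rank) case in fact already implies $\phi=\mathrm{id}$, since $\phi$ must fix all the labels and these span — at the price of invoking compactness, whereas yours leans entirely on the algebraic identification $\phi=(\gamma^{-1})^*$ established in Lemma~\ref{lemm:1}.
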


\begin{proof}
Assumption (1) implies that $f$ is the identity on $M^T$ and $f_*\colon T_pM\to T_pM$ $(p\in M^T)$ preserves each eigenspace.
Since each eigenspace is a complex vector space of complex dimension one and $f_*$ is $\C$-linear, $f_*$ must be a scalar multiple on each eigenspace.
Therefore, $f_*$ commutes with the $T$-action on $T_pM$, so the diffeomorphism $h:=(f\circ t)\circ (t\circ f)^{-1}$ of $M$ is the identity on $T_pM$.
This implies that $h$ is the identity on $M$ since $h$ is in a compact Lie group by assumption (2), proving the lemma.
\end{proof}

\begin{remark}\label{rema:h=id}
For the last part in the proof above, we used a general fact that if $G$ is a compact Lie group acting smoothly on a connected manifold $M$ and $h\in G$ fixes a point $p\in M$ and $h_*=\mathrm{id}$ on $T_pM$, then $h=\mathrm{id}$ on $M$.
The argument is as follows.
Since $G$ is a compact Lie group, there is a $G$-invariant Riemannian metric on $M$.
Then the exponential map $\exp\colon T_pM\to M$ is $G$-equivariant.
Since $h_*=\mathrm{id}$ on $T_pM$ and $\exp$ is a $G$-equivariant local diffeomorphism, $h$ fixes a neighborhood of $p$ in $M$.
But, the $h$-fixed point set in $M$ is a closed submanifold (here we again use the assumption that $h\in G$ and $G$ is a compact Lie group) and $M$ is connected, so $h=\mathrm{id}$ on $M$.
\end{remark}

\subsection{Graph cohomology of a GKM graph}
As before, the initial vertex and terminal vertex of $e\in E(\Gamma)$ will be denoted by $i(e)$ and $t(e)$ respectively.
The equivariant graph cohomology of a GKM graph $(\Gamma,\alpha)$ is defined by
\begin{equation} \label{eq:graph_equivariant_cohomology}
H^*_T(\Gamma,\alpha):=\left\{ \xi\in \mathrm{Map}(V(\Gamma),H^*(BT)) \ \left|\ \begin{array}{c}\xi(i(e))\equiv \xi(t(e))\mod {\alpha(e)} \\ \text{ for any } e\in E(\Gamma)\end{array} \right.\right\}.
\end{equation}
We regard elements in $H^*(BT)$ as constant functions in $\mathrm{Map}(V(\Gamma),H^*(BT))$.
Obviously they lie in $H^*_T(\Gamma,\alpha)$ and the (ordinary) graph cohomology of $(\Gamma,\alpha)$ is defined by 
\begin{equation} \label{eq:graph_cohomology}
H^*(\Gamma,\alpha):=H^*_T(\Gamma,\alpha)/(H^{>0}(BT))
\end{equation}
where $(H^{>0}(BT))$ denotes the ideal in $H^*_T(\Gamma,\alpha)$ generated by elements in $H^*(BT)$ with vanishing degree zero term.

\begin{definition} 
We define an action of $(\varphi,\phi)\in \Aut(\Gamma,\alpha)$ on $\xi\in \mathrm{Map}(V(\Gamma),H^*(BT))$ by 
\begin{equation} \label{eq:action}
{((\varphi,\phi)^*\xi)(p):=\phi^{-1}(\xi(\varphi(p))) \quad \text{for $p\in V(\Gamma)$.}}
\end{equation}
Note that this action is contravariant, namely 
\[
((\varphi_1,\phi_1)(\varphi_2,\phi_2))^*=(\varphi_2,\phi_2)^*(\varphi_1,\phi_1)^*.
\]
\end{definition}

\begin{lemma} \label{lemm:3}
The action of $\Aut(\Gamma,\alpha)$ on $\mathrm{Map}(V(\Gamma),H^*(BT))$ preserves $H^*_T(\Gamma,\alpha)$.
\end{lemma}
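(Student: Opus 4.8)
The plan is to verify directly that if $\xi \in H^*_T(\Gamma,\alpha)$ and $(\varphi,\phi) \in \Aut(\Gamma,\alpha)$, then $(\varphi,\phi)^*\xi$ again satisfies the congruence condition defining $H^*_T(\Gamma,\alpha)$ in \eqref{eq:graph_equivariant_cohomology}. So let $e \in E(\Gamma)$ be an arbitrary oriented edge; I must show
\[
((\varphi,\phi)^*\xi)(i(e)) \equiv ((\varphi,\phi)^*\xi)(t(e)) \pmod{\alpha(e)}.
\]
First I would unwind the definition \eqref{eq:action}: the left-hand side equals $\phi^{-1}(\xi(\varphi(i(e))))$ and the right-hand side equals $\phi^{-1}(\xi(\varphi(t(e))))$. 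Since $\varphi$ is a graph automorphism, $\varphi(e)$ is again an oriented edge with $i(\varphi(e)) = \varphi(i(e))$ and $t(\varphi(e)) = \varphi(t(e))$, so these are exactly $\phi^{-1}(\xi(i(\varphi(e))))$ and $\phi^{-1}(\xi(t(\varphi(e))))$.

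Next I would apply the defining property of $\xi$ to the edge $\varphi(e)$: we have $\xi(i(\varphi(e))) \equiv \xi(t(\varphi(e))) \pmod{\alpha(\varphi(e))}$, i.e.\ their difference is a multiple of $\alpha(\varphi(e))$ in $H^*(BT)$. Now apply the ring automorphism $\phi^{-1}$ of $H^*(BT)$ (here I use that an automorphism of $H^2(BT)$ extends uniquely to a graded ring automorphism of the polynomial ring $H^*(BT)$, which is the sense in which $\phi^{-1}$ acts on $\xi$): the difference $\phi^{-1}(\xi(i(\varphi(e)))) - \phi^{-1}(\xi(t(\varphi(e))))$ is then a multiple of $\phi^{-1}(\alpha(\varphi(e)))$. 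Finally, the compatibility condition in the definition of an automorphism of a GKM graph gives $\phi(\alpha(e)) = \alpha(\varphi(e))$, hence $\phi^{-1}(\alpha(\varphi(e))) = \alpha(e)$, and we conclude that the difference is a multiple of $\alpha(e)$, which is precisely the congruence we needed.

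This is essentially a bookkeeping argument with no real obstacle; the only point requiring a little care is the interpretation of $\phi^{-1}$ as acting on all of $H^*(BT)$ rather than just on degree-two elements, so that "multiple of $\alpha(\varphi(e))$" is carried to "multiple of $\alpha(e)$" — but since $H^*(BT)$ is a polynomial algebra on $H^2(BT)$, this extension is canonical and $\phi^{-1}$ remains a ring automorphism. One should also note that the action is well-defined as a group action, but the contravariance is already recorded in the definition, so nothing further is needed here. I would therefore keep the proof to a few lines: fix $e$, rewrite both sides via \eqref{eq:action} and the edge-compatibility of $\varphi$, transport the congruence for $\varphi(e)$ through the ring automorphism $\phi^{-1}$, and use $\phi(\alpha(e)) = \alpha(\varphi(e))$ to land back modulo $\alpha(e)$.
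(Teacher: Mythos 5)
Your proof is correct and is essentially identical to the paper's: both apply the congruence for $\xi$ on the edge $\varphi(e)$, transport it through $\phi^{-1}$, and use $\phi^{-1}(\alpha(\varphi(e)))=\alpha(e)$. Your explicit remark that $\phi^{-1}$ extends to a graded ring automorphism of $H^*(BT)$ is a point the paper leaves implicit, but it is the same argument.
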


\begin{proof}
{Let $\xi\in H^*_T(\Gamma,\alpha)$ and $e\in E(\Gamma)$.
Since $\varphi(i(e))=i(\varphi(e))$ and $\varphi(t(e))=t(\varphi(e))$, it follows from \eqref{eq:graph_equivariant_cohomology} that 
\[
\begin{split}
\xi(\varphi(i(e)))- \xi(\varphi(t(e)))&=\xi(i(\varphi(e)))-\xi(t(\varphi(e))) \\
&\equiv 0 \mod{\alpha(\varphi(e))}.
\end{split}
\] 
We apply $\phi^{-1}$ to the both sides above.
Then, noting that $\phi^{-1}(\alpha(\varphi(e)))=\alpha(e)$ (see the definition of $(\varphi,\phi)$ being an automorphism of $(\Gamma,\alpha)$), we obtain 
\[
\phi^{-1}(\xi(\varphi(i(e))))\equiv \phi^{-1}(\xi(\varphi(t(e)))) \mod{\alpha(e)}.
\]
This together with \eqref{eq:action} shows that $(\varphi,\phi)^*\xi$ satisfies the congruence relation in \eqref{eq:graph_equivariant_cohomology}, proving the lemma.} 
\end{proof}

The action of $(\varphi,\phi)$ on $H^*_T(\Gamma,\alpha)$ obviously preserves the ideal $(H^{>0}(BT))$, so it descends to an action on $H^*(\Gamma,\alpha)$.

\subsection{GKM manifold}
In addition to (A1), we further assume that $M$ satisfies
\begin{enumerate}
\item[(A2)] $H^{odd}(M)=0$ (so $H^*(M)$ is torsion free by Poincar\'e duality).
\end{enumerate}
We call such an almost complex $T$-manifold a GKM manifold.
GKM theory (\cite{go-ko-ma98, gu-za01}) tells us that 
\begin{equation} \label{eq:iso_with_Q}
H^*_T(M)\otimes\Q \cong H^*_T(\Gamma_M,\alpha_M) \otimes\Q, \quad H^*(M)\otimes\Q\cong H^*(\Gamma_M,\alpha_M) \otimes\Q
\end{equation}
for a GKM manifold $M$.
To be more precise, since $H^{odd}(M)=0$, the restriction map 
\begin{equation} \label{eq:restriction_iota}
\iota^*\colon H^*_T(M)\to H^*_T(M^T)
\end{equation}
is injective.
Here, since $M^T$ is isolated, we have natural identifications:
\begin{equation} \label{eq:identification}
\begin{split}
H^*_T(M^T)&=H^0(M^T)\otimes H^*(BT)\\
&=\Hom(H_0(M^T),H^*(BT))\\
&=\mathrm{Map}(M^T,H^*(BT)).
\end{split}
\end{equation}
GKM theory says that the image of $\iota^*$ in \eqref{eq:restriction_iota} agrees with $H^*_T(\Gamma_M,\alpha_M)$ when tensored with $\Q$.
\begin{remark}
The isomorphisms in \eqref{eq:iso_with_Q} hold without tensoring with $\Q$ if all the isotropy subgroups for $M$ are connected (\cite{fr-pu06}).
This is the case for the regular semisimple Hessenberg variety $\Hess(S,h)$ with the action of $T$ (or $\widehat{T}$) treated in the next section.
\end{remark}
If $f\in \Aut(M)$ is weakly $T$-equivariant with respect to $\gamma\in \Aut(T)$, then we have an automorphism $f^*$ of $H^*_T(M)$.
We regard $H^*_T(M)$ as a subalgebra of $H^*_T(M^T)$ through $\iota^*$ in \eqref{eq:restriction_iota} and identify $H^*_T(M^T)$ with $\mathrm{Map}(M^T,H^*(BT))$ through \eqref{eq:identification}.
Then one sees that 
\[
(f^*\xi)(p)=\gamma^*(\xi(f(p)))\quad \text{for } \xi\in H^*_T(M)\subset \mathrm{Map}(M^T,H^*(BT)),\  p\in M^T.
\]
Since the automorphism of $(\Gamma_M,\alpha_M)$ induced from $f$ is $(f,(\gamma^{-1})^*)$ by Lemma~\ref{lemm:1}, the identity above together with \eqref{eq:action} and \eqref{eq:iso_with_Q} shows the following.
\begin{lemma} \label{lemm:action_of_Aut}
Let $M$ be a GKM manifold.
If $f\in \Aut(M)$ is weakly $T$-equivariant, then the action of $f$ on $H^*_T(M)\otimes \Q$ and $H^*(M)\otimes \Q$ agrees with the action of the automorphism of $(\Gamma_M,\alpha_M)$ induced from $f$ on them.
\end{lemma}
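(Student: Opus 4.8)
The plan is to reduce the lemma to a single explicit formula for the action of $f^*$ on $H^*_T(M)$ after restricting to the fixed locus, and then to observe that this formula coincides term by term with the one for the action of the induced graph automorphism. Suppose $f$ is weakly $T$-equivariant with respect to $\gamma\in\Aut(T)$. First I would make precise the self-map of the Borel construction induced by $f$: choosing a $\gamma$-equivariant self-map $g$ of $ET$ (which exists and is unique up to $T$-homotopy because $ET$ is a contractible free $T$-space), the rule $(e,x)\mapsto(g(e),f(x))$ respects the diagonal $T$-action and hence descends to a map $f_T\colon M_T\to M_T$ covering $B\gamma\colon BT\to BT$; this induces $f^*=f_T^*$ on $H^*_T(M)=H^*(M_T)$. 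Restricting to $M^T$, on which $T$ acts trivially, the Borel construction is $BT\times M^T$ and $f_T$ restricts to $B\gamma\times(f|_{M^T})$, so under the identification \eqref{eq:identification} one obtains
\[
(f^*\xi)(p)=\gamma^*\big(\xi(f(p))\big)\qquad\text{for }\xi\in H^*_T(M)\subset\mathrm{Map}(M^T,H^*(BT)),\ p\in M^T,
\]
where $\gamma^*=(B\gamma)^*$ is precomposition with $\gamma$ on $H^2(BT)=\Hom(T,S^1)$. This is the step I expect to require the most care, since it is the only place where one has to argue with Borel constructions rather than purely combinatorially; the remainder is bookkeeping.

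Next I would compare this with the graph side. By Lemma~\ref{lemm:1} the automorphism of $(\Gamma_M,\alpha_M)$ induced by $f$ is $(\varphi,\phi)=(f|_{M^T},(\gamma^{-1})^*)$, and since $(\gamma^{-1})^*=(\gamma^*)^{-1}$ on $H^2(BT)$, formula \eqref{eq:action} gives
\[
\big((\varphi,\phi)^*\xi\big)(p)=\phi^{-1}\big(\xi(\varphi(p))\big)=\gamma^*\big(\xi(f(p))\big),
\]
which is literally the same expression as the one displayed above. Hence the two actions are given by the same formula on $\mathrm{Map}(M^T,H^*(BT))$.

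Finally I would invoke the GKM description. The restriction $\iota^*$ of \eqref{eq:restriction_iota} is injective, and after $\otimes\Q$ its image is exactly $H^*_T(\Gamma_M,\alpha_M)\otimes\Q$ inside $\mathrm{Map}(M^T,H^*(BT))\otimes\Q$, which is the content of \eqref{eq:iso_with_Q}. Since $f^*$ and $(\varphi,\phi)^*$ act by the identical formula on this common submodule, they agree on $H^*_T(M)\otimes\Q$. Both $f^*$ and $(\varphi,\phi)^*$ cover $B\gamma$ and therefore preserve the ideal $(H^{>0}(BT))$ (because $\gamma^*$ preserves $H^{>0}(BT)$), so they descend compatibly to $H^*(M)\otimes\Q\cong H^*(\Gamma_M,\alpha_M)\otimes\Q$ via \eqref{eq:graph_cohomology} and the second isomorphism in \eqref{eq:iso_with_Q}, and agree there as well. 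When all isotropy subgroups of $M$ are connected one may drop $\otimes\Q$ throughout, by the remark on connected isotropy groups following \eqref{eq:identification}.
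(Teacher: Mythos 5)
Your proposal is correct and follows essentially the same route as the paper: the paper also reduces the lemma to the identity $(f^*\xi)(p)=\gamma^*(\xi(f(p)))$ on $\mathrm{Map}(M^T,H^*(BT))$ and matches it against \eqref{eq:action} using Lemma~\ref{lemm:1}. The only difference is that you supply the Borel-construction justification for that identity, which the paper states without proof ("one sees that"), so your write-up is a more detailed version of the same argument.
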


Motivated by the fact \eqref{eq:iso_with_Q}, we make the following definition.

\begin{definition}
We define $\Aut^*(\Gamma,\alpha)$ to be the subgroup of $\Aut(\Gamma,\alpha)$ acting trivially on $H^*(\Gamma,\alpha) \otimes\Q$.
\end{definition}

\begin{proposition} \label{prop:1}
Let $M$ be a GKM manifold with an effective $T$-action.
If $\Aut^*(\Gamma_M,\alpha_M)$ is trivial, then there is no connected compact non-abelian Lie subgroup of $\Aut(M)$ containing the torus $T$.
\end{proposition}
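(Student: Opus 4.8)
The plan is to assume that $G$ is a connected compact Lie subgroup of $\Aut(M)$ containing $T$ and to deduce that $G$ is abelian. I first form the homomorphism $\Psi\colon N_G(T)\to\Aut(\Gamma_M,\alpha_M)$: every $f\in N_G(T)$ is weakly $T$-equivariant with respect to conjugation by $f$, so by Lemma~\ref{lemm:1} it induces an automorphism $\Psi(f)$ of $(\Gamma_M,\alpha_M)$, and $\Psi$ is a group homomorphism with $T\subset\ker\Psi$. Because $G$ is connected, each $f\in G$ acts trivially on $H^*(M)$, so by Lemma~\ref{lemm:action_of_Aut} the automorphism $\Psi(f)$ acts trivially on $H^*(\Gamma_M,\alpha_M)\otimes\Q$, i.e.\ $\Psi(f)\in\Aut^*(\Gamma_M,\alpha_M)$. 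Since $\Aut^*(\Gamma_M,\alpha_M)$ is trivial, $N_G(T)\subset\ker\Psi$, and applying Lemma~\ref{lemm:2} to each $f\in N_G(T)$ (using that $f$ and $T$ lie in the compact group $G$) shows that $f$ commutes with $T$. Hence $N_G(T)=Z_G(T)$.

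Next I show $Z_G(T)$ is abelian. As the centralizer of a torus in a connected compact Lie group, $Z_G(T)$ is connected, and $T$ lies in its center; hence $Z_G(T)$ preserves $M^T$ and, being connected and acting on the finite set $M^T$, fixes $M^T$ pointwise. Fix $p\in M^T$. Differentiation at $p$ gives an embedding $Z_G(T)\hookrightarrow\GL_\C(T_pM)$, injective by the fact recalled in Remark~\ref{rema:h=id} since $Z_G(T)$ is compact. Under this embedding $T$ acts on $T_pM$ with weights $\Lambda_p$, which by assumption (A1) are pairwise linearly independent and in particular pairwise distinct; hence the $T$-weight spaces in $T_pM$ are all one-dimensional, so the centralizer of $T$ in $\GL_\C(T_pM)$ is the diagonal torus with respect to a weight basis. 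Since $Z_G(T)$ centralizes $T$, it follows that $Z_G(T)$ is abelian, hence a torus. As $Z_G(T)$ contains a maximal torus $T'$ of $G$ through $T$, we get $Z_G(T)=T'$. Thus $N_G(T)=Z_G(T)=T'$ is a maximal torus of $G$.

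It remains to show $G=T'$, i.e.\ that the Weyl group $W=N_G(T')/T'$ is trivial. The pair $(M,T')$ is again a GKM manifold with the same underlying graph $\Gamma_M$: indeed $M^{T'}=M^T$ (since $T'/T$ is a torus acting on the finite set $M^T$), the $T'$-weights at a fixed point restrict to the pairwise independent weights $\Lambda_p$ and so are pairwise distinct, which forces the $T$- and $T'$-weight spaces to coincide, and $H^{odd}(M)=0$. Repeating the argument of the first paragraph with $T'$ in place of $T$, every $v\in N_G(T')$ induces an automorphism of $(\Gamma_M,\alpha_{M,T'})$ lying in $\Aut^*(\Gamma_M,\alpha_{M,T'})$, and this produces an injection $W\hookrightarrow\Aut^*(\Gamma_M,\alpha_{M,T'})$ (a kernel element would centralize $T'$, hence lie in $Z_G(T')=T'$). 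So it suffices to prove $\Aut^*(\Gamma_M,\alpha_{M,T'})$ is trivial. Both $\alpha_M=\alpha_{M,T}$ and its refinement $\alpha_{M,T'}$ are of full rank since the actions are effective, so each automorphism of these GKM graphs is determined by its underlying graph automorphism; thus it is enough to show that the graph automorphism $\varphi$ underlying a nontrivial element of $\Aut^*(\Gamma_M,\alpha_{M,T'})$ also underlies an element of $\Aut^*(\Gamma_M,\alpha_{M,T})$, for then the hypothesis gives $\varphi=\mathrm{id}$, hence (by full rank of $\alpha_{M,T'}$) the original element is trivial, a contradiction; so $\Aut^*(\Gamma_M,\alpha_{M,T'})=\{1\}$ and $G=T'$.

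\textbf{The main obstacle} is exactly this last comparison. Such a $\varphi$ is realized by a weakly $T'$-equivariant diffeomorphism $v$ of $M$ that we may take to represent a nontrivial element of $W$; the question is whether $v$ also normalizes $T$, for then $v$ induces an element of $\Aut(\Gamma_M,\alpha_{M,T})$ with underlying graph automorphism $\varphi$, and it acts trivially on $H^*(M)\otimes\Q$ (being in the connected group $G$) by Lemma~\ref{lemm:action_of_Aut}, so it lies in $\Aut^*(\Gamma_M,\alpha_{M,T})=\{1\}$. Equivalently, one must show that the lattice automorphism attached to $\varphi$ preserves $\ker\bigl(H^2(BT')\to H^2(BT)\bigr)$. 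I would attack this through the structure forced on $G$: since $Z_G(T)=T'$, no root of $(G,T')$ can vanish on $T$ (otherwise the corresponding $\mathrm{SU}(2)$-subgroup would lie in $Z_G(T)=T'$, which is abelian), and I expect this regularity, together with the pairwise independence of $\Lambda_p$, to force $W$ to stabilize $T$ as a subtorus of $T'$; then $N_G(T')\subset N_G(T)=T'$ gives $W=\{1\}$. Turning ``pairwise independence of $\Lambda_p$ forces $W$ to stabilize $T$'' into a rigorous argument is the crux; alternatively, proving outright that $T'=T$ (so that $T$ is already a maximal torus of $G$) would bypass it.
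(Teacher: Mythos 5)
Your first two paragraphs are correct. The first reproduces the paper's argument exactly: $\Psi(N_G(T))\subseteq\Aut^*(\Gamma_M,\alpha_M)=\{1\}$, hence by Lemma~\ref{lemm:2} every element of $N_G(T)$ centralizes $T$, so $N_G(T)=Z_G(T)$. The second paragraph (embedding $Z_G(T)$ into $\GL_\C(T_pM)$ via the isotropy representation at a fixed point and using the pairwise distinctness of the weights in $\Lambda_p$ to conclude that $Z_G(T)$ is a maximal torus $T'$ of $G$) is a correct and genuinely useful supplement that the paper does not spell out. But the proof is not finished, and you say so yourself: you still must show that the Weyl group $N_G(T')/T'$ is trivial, and your reduction lands you in $\Aut^*(\Gamma_M,\alpha_{M,T'})$ while the hypothesis only kills $\Aut^*(\Gamma_M,\alpha_{M,T})$. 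The obstruction you identify is real: a Weyl element need not stabilize $T$ inside $T'$, so the lattice automorphism it induces on $H^2(BT')$ need not descend to $H^2(BT)$, and there is no evident map $\Aut^*(\Gamma_M,\alpha_{M,T'})\to\Aut^*(\Gamma_M,\alpha_{M,T})$. Moreover the purely group-theoretic shortcut is unavailable: $N_G(T)=Z_G(T)=T'$ with $T'$ maximal does \emph{not} force $G$ abelian (take $G=U(2)$ and $T=\{\mathrm{diag}(z,1)\}$; then $Z_G(T)=N_G(T)$ is the diagonal maximal torus, yet $G$ is non-abelian). So some further geometric input is genuinely needed, and your proposal does not supply it.

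For what it is worth, the paper's own proof is even terser at exactly this point: having shown that $N_G(T)$ centralizes $T$, it simply asserts ``this implies that $G$ is abelian,'' which as a bare implication requires $T$ to be a maximal torus of $G$ (then $Z_G(T)=T$, so $N_G(T)=T$, the Weyl group is trivial, and $G=T$). In the intended application that maximality is supplied separately by Proposition~\ref{prop:toral_rank_Hess} before Proposition~\ref{prop:1} is invoked, so the argument closes there without ever meeting your $T'\supsetneq T$ scenario. The practical ways to repair your write-up are therefore: either add the hypothesis that $T$ is a maximal compact torus of $\Aut(M)$ (harmless for the Hessenberg application, and it makes your paragraph two collapse to $Z_G(T)=T$ and the proof end after paragraph one), or prove the comparison statement you flag — that a nontrivial Weyl element of $(G,T')$ forces a nontrivial element of $\Aut^*(\Gamma_M,\alpha_{M,T})$ — which neither you nor the paper currently does.
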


\begin{proof}
Let $G$ be a connected compact Lie subgroup of $\Aut(M)$ containing $T$ and let $f$ be an element of the normalizer of $T$ in $G$.
Then $f$ is a weakly $T$-equivariant diffeomorphism of $M$ preserving the almost complex structure on $M$, so it induces an automorphism $\hat{f}$ of $(\Gamma_M,\alpha_M)$.
Since $f$ is contained in the connected group $G$, $f$ induces the identity on $H^*(M)\otimes\Q=H^*(\Gamma_M,\alpha_M)\otimes\Q$.
Therefore, $\hat{f}$ is in $\Aut^*(\Gamma_M,\alpha_M)$ and $\hat{f}$ is the identity because $\Aut^*(\Gamma_M,\alpha_M)$ is assumed to be trivial.
Then, it follows from Lemma~\ref{lemm:2} that $f$ commutes with every element of $T$.
This implies that $G$ is abelian, proving the proposition.
\end{proof}

\begin{remark}
Since $\Aut(M)$ is a Lie group, the maximal compact subgroup $K$ of the identity component $\Aut^0(M)$ of $\Aut(M)$ is unique up to conjugation and $\Aut^0(M)$ is homeomorphic to $K\times \R^s$ for some $s$ (in particular $K$ is connected).
Therefore, the proposition above implies that when $M$ is a GKM manifold, the maximal compact subgroup of $\Aut^0(M)$ is a torus if $\Aut^*(\Gamma_M,\alpha_M)$ is trivial.
\end{remark}

We say that an almost complex manifold $M$ has a toral rank $r$ if the rank of $\Aut(M)$, that is the dimension of a maximal compact torus of $\Aut(M)$, is $r$.

\begin{lemma} \label{lemm:K33}
A GKM manifold $M$ which has the bipartite graph $K_{3,3}$ as the underlying graph has toral rank $2$.
\end{lemma}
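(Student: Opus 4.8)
The plan is to identify explicitly which almost complex $T$-manifolds can have $K_{3,3}$ as their GKM graph and then bound the dimension of a maximal compact torus in the automorphism group directly. First I would observe that since $K_{3,3}$ is $3$-regular, $M$ is a $6$-real-dimensional (complex $3$-dimensional) manifold, and the axial function $\alpha_M$ assigns to the three edges out of each vertex three pairwise linearly independent elements of $H^2(BT)$. The congruence condition (3) in the definition of an axial function, applied around the $4$-cycles of $K_{3,3}$, pins down the axial function tightly; one should be able to show that after a change of basis the axial function is (up to sign conventions and an overall lattice automorphism) forced to come from the standard action of a $2$-torus, so that the effective torus $T$ acting has rank exactly $2$. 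In particular $T/\C^*$-type considerations are not needed here: the point is simply that the rank of the acting torus is $2$, giving the lower bound on the toral rank, and $T \subset \Aut(M)$ as a compact torus.

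For the upper bound — the substance of the lemma — I would combine the GKM-graph machinery already developed in this section with the known automorphism group of a specific small flag variety. The graph $K_{3,3}$ is precisely the GKM graph of $\Fl(\C^3)$ (the six $T$-fixed points being the six permutation flags, with the Bruhat-style edge structure giving $K_{3,3}$). So the natural strategy mirrors the "first step" described in the introduction for Theorem~\ref{theo:main}(1): reduce the rank bound to the fact that $\Aut^0(\Fl(\C^3)) = \mathrm{PGL}_3(\C) \rtimes \{\pm 1\}$ has rank $2$. Concretely, suppose $G \subset \Aut(M)$ is a compact torus of rank $\ge 3$ containing (a conjugate of) $T$; since $T$ has rank $2$ and $G$ is abelian, $G$ properly contains $T$, and by Lemma~\ref{lemm:1} every element of $G$ induces an automorphism of $(\Gamma_M,\alpha_M) = (K_{3,3},\alpha_M)$. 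Using Lemma~\ref{lemm:Aut_graph_finite} and the effectiveness hypothesis, the induced map $G \to \Aut(\Gamma_M,\alpha_M)$ has image in a finite group, so a finite-index subgroup $G'$ of $G$ lies in the kernel; then Lemma~\ref{lemm:2} forces each element of $G'$ to commute with $T$, but this does not yet give a contradiction since $G'$ is already abelian. So the genuinely needed input is that a GKM manifold with graph $K_{3,3}$ and effective $T$-action admits \emph{no} enlargement of $T$ to a higher-rank compact torus inside $\Aut(M)$, which I would obtain by showing such $M$ must in fact be $\Fl(\C^3)$ (or analyzing $\Aut(M)$ via its action on $H^*(M) \cong H^*(K_{3,3},\alpha_M)$ together with Lemma~\ref{lemm:action_of_Aut}) and invoking $\mathrm{rank}\,\Aut^0(\Fl(\C^3)) = 2$.

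The main obstacle I anticipate is the classification step: proving that a GKM manifold with underlying graph $K_{3,3}$ and the given axial-function constraints is rigid enough that its automorphism group cannot have rank exceeding $2$. One must rule out "exotic" almost complex manifolds realizing $(K_{3,3},\alpha_M)$ that might carry a larger torus action not visible in the abstract GKM graph — here the effectiveness assumption and assumption (A1) (pairwise linear independence of weights at each fixed point) are essential, as is the structure of the $4$-cycles forcing the axial function into standard form. Once the axial function is standard, the rank of a maximal torus acting effectively on $M$ through the GKM data is $2$, and since passing to a larger compact torus in $\Aut(M)$ would either enlarge the effective torus (contradicting the rank-$2$ bound coming from $\Fl(\C^3)$) or act trivially (contradicting effectiveness once combined with Lemma~\ref{lemm:2} and Remark~\ref{rema:h=id}), we conclude the toral rank is exactly $2$. \qed
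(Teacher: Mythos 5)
Your proposal does not close the argument, and you acknowledge this yourself: the step ``such $M$ must in fact be $\Fl(\C^3)$'' is exactly the missing content, and it is not a step you can expect to fill in. A GKM graph does not in general determine the manifold, and the lemma is deliberately stated for an \emph{arbitrary} GKM manifold whose underlying graph is $K_{3,3}$ --- indeed, Remark~\ref{rema:toral_rank}(1) presents this lemma as an \emph{alternative} to invoking $\Aut(\Fl(\C^3))$, so reducing it back to $\mathrm{rank}\,\Aut^0(\Fl(\C^3))=2$ would require a rigidity theorem you have not proved and would undercut the point of the lemma. Your other route, showing that the axial function on $K_{3,3}$ is ``forced into standard form'' by the congruence relations around $4$-cycles, is also only asserted; and even granting it, it would constrain only the torus appearing in the \emph{given} GKM structure, not rule out a larger compact torus in $\Aut(M)$ extending it, whose weights at the fixed points could a priori span a rank-$3$ lattice. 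Finally, as you note, the chain Lemma~\ref{lemm:1} $\to$ Lemma~\ref{lemm:Aut_graph_finite} $\to$ Lemma~\ref{lemm:2} produces no contradiction once the ambient group is already abelian, so that machinery cannot supply the upper bound here.

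The paper's proof rests on a topological idea that your proposal is missing entirely. Suppose the toral rank were $3$; since $\dim_{\R}M=6$ (as you correctly deduce from trivalence) and $H^{odd}(M)=0$, the orbit space $M/T$ by a $3$-dimensional torus is a homologically trivial $3$-dimensional manifold with corners by \cite{ma-pa06}, so its boundary is a $2$-sphere. The underlying graph of the GKM graph is the $1$-skeleton of this manifold with corners and sits in its boundary, so $K_{3,3}$ would embed in $S^2$, contradicting the non-planarity of $K_{3,3}$. This is the essential input; without it (or a substitute of comparable strength) your argument remains a strategy sketch rather than a proof.
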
 

\begin{proof}
Since $K_{3,3}$ is a trivalent graph, the real dimension of $M$ is six.
Moreover, since $M$ is a GKM manifold, we have $H^{odd}(M)=0$ and the toral rank of $M$ is at least two.
Suppose that the toral rank of $M$ is three and let $T$ be a 3-dimensional torus in $\Aut^0(M)$.
Then the orbit space $M/T$ is a 3-dimensional manifold with corners and homologically trivial (\cite{ma-pa06}).
Therefore, the boundary of $M/T$ is a 2-sphere.
The underlying graph of the GKM graph of $M$, which is $K_{3,3}$ by assumption, is the 1-skeleton of the manifold with corners $M/T$ and sits in the boundary of $M/T$.
This shows that $K_{3,3}$ is embedded in a $2$-sphere, which contradicts the well-known fact that $K_{3,3}$ is not a planar graph.
\end{proof}

We conclude this section with the proof of Theorem~\ref{theo:main2} in the introduction.

\subsection{Proof of Theorem~\ref{theo:main2}}
Let $X$ be a projective GKM manifold.
%It follows from \cite[Corollary 2.19]{brio18} that $\Aut^0(X)$ is a linear algebraic group.
Let $T$ be a maximal compact torus in $\Aut(X)$.
We denote by $N(T)$ (resp.\ $N^0(T)$) the normalizer of $T$ in $\Aut(X)$ (resp.\ $\Aut^0(X)$).

\medskip
\noindent
\begin{claim}
The inclusion map $N(T)\hookrightarrow \Aut(X)$ induces an isomorphism 
\[
N(T)/N^0(T)\cong \Aut(X)/{\Aut^0(X)}.
\]
\end{claim}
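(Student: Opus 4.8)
The plan is to show that $N(T)$ meets every connected component of $\Aut(X)$ and that $N(T)\cap\Aut^0(X)=N^0(T)$; together these give the claimed isomorphism. The second point is immediate from the definitions, since $N^0(T)$ is by construction $N(T)\cap\Aut^0(X)$. So the real content is surjectivity of the composite $N(T)\hookrightarrow\Aut(X)\twoheadrightarrow\Aut(X)/\Aut^0(X)=\pi_0\Aut(X)$.

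For surjectivity, let $g\in\Aut(X)$ be arbitrary; I want to find $n\in N(T)$ lying in the same component as $g$, i.e.\ $n g^{-1}\in\Aut^0(X)$. The key structural input is that $\Aut(X)$ is a Lie group (Kobayashi's theorem), so its identity component $\Aut^0(X)$ is a normal subgroup of finite index precisely when $\pi_0\Aut(X)$ is finite; but we must be careful not to assume finiteness here, since this claim is a step toward Theorem~\ref{theo:main2}. What we can use is that $\Aut^0(X)$ is a Lie group with maximal compact subgroup $K$, unique up to conjugacy, and that $T\subset K$ (in fact one should first arrange $T$ to be a maximal compact torus \emph{of} $K$; since $T$ is a maximal compact torus in all of $\Aut(X)$, it is a fortiori maximal among compact tori of $\Aut^0(X)$, hence a maximal torus of $K$). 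Now given $g\in\Aut(X)$, conjugation by $g$ is an automorphism of $\Aut^0(X)$ (as the latter is normal), so $g\Aut^0(X)g^{-1}=\Aut^0(X)$ and $gKg^{-1}$ is again a maximal compact subgroup of $\Aut^0(X)$. By uniqueness up to conjugacy, there is $a\in\Aut^0(X)$ with $a(gKg^{-1})a^{-1}=K$; replacing $g$ by $ag$ (which is in the same component of $\Aut(X)$) we may assume $gKg^{-1}=K$, i.e.\ $g$ normalizes $K$. Then $gTg^{-1}$ is a maximal torus of $K$, so by the conjugacy of maximal tori in the compact connected group $K$ there is $k\in K\subset\Aut^0(X)$ with $k(gTg^{-1})k^{-1}=T$; then $n:=kg$ normalizes $T$, so $n\in N(T)$, and $ng^{-1}=k\in\Aut^0(X)$, so $n$ and $g$ lie in the same component. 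This proves surjectivity.

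Injectivity of the induced map $N(T)/N^0(T)\to\Aut(X)/\Aut^0(X)$ is the easier direction: if $n\in N(T)$ maps into $\Aut^0(X)$, then $n\in N(T)\cap\Aut^0(X)=N^0(T)$ by definition, so the map is injective. Combining the two gives the isomorphism.

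I expect the main obstacle to be the bookkeeping around the maximal compact subgroup $K$ of $\Aut^0(X)$: one must justify that $K$ is unique up to conjugacy by elements of $\Aut^0(X)$ (this is the Cartan–Iwasawa–Malcev theorem for the Lie group $\Aut^0(X)$, already invoked in the remark after Proposition~\ref{prop:1}), that $\Aut^0(X)$ is normal in the Lie group $\Aut(X)$ so that conjugation by $g$ preserves it, and that $T$ is actually a maximal torus of $K$ rather than merely a subtorus. None of these is deep, but each must be stated cleanly. A secondary subtlety is logical ordering: this claim is used inside the proof of Theorem~\ref{theo:main2}, so the argument must not circularly invoke finiteness of $\pi_0\Aut(X)$; the proof above indeed avoids that, using only the Lie group structure and conjugacy of maximal compact subgroups and of maximal tori.
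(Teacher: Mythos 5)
Your proof is correct and follows essentially the same route as the paper: injectivity is immediate from $N^0(T)=N(T)\cap\Aut^0(X)$, and surjectivity comes from conjugating $gTg^{-1}$ back to $T$ by an element of $\Aut^0(X)$. The only difference is that the paper invokes conjugacy of maximal compact tori in $\Aut^0(X)$ in one step, whereas you derive it by first conjugating the maximal compact subgroup $K$ and then a maximal torus inside $K$; this is just an expanded proof of the same fact.
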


\begin{proof}
Since $N^0(T)=N(T)\cap \Aut^0(X)$, the injectivity of the induced map above is obvious.
We shall prove the surjectivity.
Take any $[g]\in \Aut(X)/{\Aut^0(X)}$.
Since both $T$ and $gT g^{-1}$ are maximal compact tori in $\Aut^0(X)$, there is an $h\in \Aut^0(X)$ such that $h(gT g^{-1})h^{-1}=T$.
This means that $hg\in N(T)$.
Since $h$ is in $\Aut^0(X)$ and $\Aut^0(X)$ is a normal subgroup of $\Aut(X)$, we have $hg=gh'$ for some $h'\in \Aut^0(X)$.
Therefore, $[hg]=[gh']=[g]$, proving the surjectivity.
\end{proof}

Let $\mathcal{G}_X$ be the GKM graph associated with the projective GKM manifold $X$ with the $T$-action.
Then we have a homomorphism 
\begin{equation} \label{eq:Psi_GKM}
\Psi\colon N(T)\to \Aut(\mathcal{G}_X)
\end{equation}
by Lemma~\ref{lemm:1}.
Since $\Aut(\mathcal{G}_X)$ acts on $H^*(\mathcal{G}_X)$ and $H^*(\mathcal{G}_X)\otimes \Q=H^*(X)\otimes\Q$, we also have a homomorphism 
\[
\pi\colon {\Aut(\mathcal{G}_X)}\to \mathrm{GL}(H^2(\mathcal{G}_X))\to \mathrm{GL}(H^2(X)\otimes\Q),
\] 
where the latter map is the natural one.
Thus we have 
\begin{equation*} 
N(T)\xrightarrow{\Psi}\Aut(\mathcal{G}_X)\xrightarrow{\pi}\mathrm{GL}(H^2(X)\otimes\Q).
\end{equation*}
(Note that $N^0(T)$ is contained in $\ker(\pi\circ\Psi)$ since $N^0(T)$ is contained the connected group $\Aut^0(X)$ so that its acts trivially on $H^2(X)$.) 
As is mentioned in \cite[Section 2]{cant18}, $\Aut^0(X)$ is of finite index in the kernel of the natural map 
\begin{equation*} %\label{eq:Aut_H2}
\kappa\colon {\Aut(X)}\to \mathrm{GL}(H^2(X))\hookrightarrow \mathrm{GL}(H^2(X)\otimes\Q), 
\end{equation*}
where the latter map above is injective because $H^2(X)$ is torsion free.
The fact that $\ker\kappa/{\Aut^0(X)}$ is a finite group and Claim above show that 
\begin{equation} \label{eq:finite_index_1}
\ker(\pi\circ \Psi)/N^0(T) \text{ is a finite group}
\end{equation} 
because $\pi\circ\Psi$ agrees with the restriction of the map $\kappa$ to $N(T)$ by Lemma~\ref{lemm:action_of_Aut}.

It follows from \eqref{eq:Psi_GKM} that $N(T)/{\ker\Psi}$ is a subgroup of $\Aut(\mathcal{G}_X)$.
Since $\Aut(\mathcal{G}_X)$ is a finite group by Remark~\ref{rema:finite_group}, so is $N(T)/{\ker\Psi}$.
Therefore 
\begin{equation} \label{eq:finite_index_2}
N(T)/{\ker(\pi\circ\Psi)}\text{ is also a finite group}
\end{equation}
because $\ker(\pi\circ\Psi)$ contains $\ker\Psi$.
It follows from \eqref{eq:finite_index_1} and \eqref{eq:finite_index_2} that $N(T)/N^0(T)$ is a finite group.
This together with Claim above proves Theorem~\ref{theo:main2}.

\section{Regular semisimple Hessenberg varieties} \label{sect:Hessenberg}

A Hessenberg function is a function $h\colon [n]\to [n]$ which satisfies 
\[
h(1)\le h(2)\le \cdots\le h(n)\quad\text{and}\quad h(j)\ge j\quad (\forall j\in [n]).
\]
We often express $h$ as a vector $(h(1), \dots,h(n))$.
Let $S$ be an $n\times n$ matrix with distinct eigenvalues (such a matrix $S$ is called regular semisimple).
Then 
\[
\Hess(S,h)=\{ (V_1\subset V_2\subset \cdots\subset V_n=\C^n)\in \Fl(\C^n)\mid SV_j\subset V_{h(j)}\ (\forall j\in [n])\}
\]
is also called a regular semisimple Hessenberg variety.
$\Hess(S,h)$ has the following properties (\cite{de-pr-sh92}):
\begin{enumerate}
\item It is a smooth projective variety of complex dimension $\sum_{j=1}^n(h(j)-j)$.
%\item The diffeomorphism type is independent of $S$.
\item It is connected if and only if $h(j)\ge j+1$ for any $j\in [n-1]$.
\item $H^{odd}(\Hess(S,h))=0$.
\end{enumerate}

\begin{remark} \label{rema:diff_iso}
The diffeomorphism type of $\Hess(S,h)$ is independent of $S$.
The isomorphism type of $\Hess(S,h)$ as a variety is also independent of $S$ when $h=(2,3,\dots,n,n)$.
In fact, $\Hess(S,h)$ is a toric variety called the permutohedral variety in this case.
However, the isomorphism type of $\Hess(S,h)$ does depend on $S$ when $h=(n-1,n,\dots,n)$ and $n\ge 4$ (\cite{br-so24}).
\end{remark}

\begin{example}[Extreme cases]
When $h_n=(n,\dots,n)$, $\Hess(S,h_n)=\Fl(\C^n)$.
When $h_1=(1,2,\dots,n)$, $\Hess(S,h_1)$ consists of $n!$ points which are permutation flags.
\end{example}

\begin{remark}
If $h(j)=j$ for some $j\in [n-1]$, then $\Hess(S,h)$ is disconnected by (2) above Remark~\ref{rema:diff_iso}.
Their connected components are isomorphic to each other and each one is isomorphic to a product of smaller regular semisimple Hessenberg varieties.
\end{remark}

\begin{lemma} \label{lemm:linear_algebraic_group}
$\Aut^0(\Hess(S,h))$ is a linear algebraic group, so $\Aut^0(\Hess(S,h))$ is a semidirect product of it reductive part and unipotent part.
\end{lemma}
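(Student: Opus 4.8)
The plan is to invoke the general structure theory of automorphism group schemes of projective varieties, which is exactly the content of \cite[Corollary 2.19]{brio18} cited earlier in the introduction. First I would recall that $\Hess(S,h)$ is a smooth projective variety over $\C$ (property (1) listed above). By a theorem of Grothendieck (see the discussion surrounding \cite[Corollary 2.19]{brio18}), the automorphism functor of a projective scheme is represented by a group scheme $\underline{\Aut}(\Hess(S,h))$ that is locally of finite type over $\C$; its identity component $\underline{\Aut}^0(\Hess(S,h))$ is then a connected group scheme of finite type, and passing to $\C$-points gives that $\Aut^0(\Hess(S,h))$ is a connected algebraic group. The key additional input is that this identity component is \emph{linear} (affine), rather than merely an extension of an abelian variety by a linear group as in the general Chevalley decomposition. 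This is precisely where \cite[Corollary 2.19]{brio18} is used: for a projective variety $X$ whose (rational) cohomology is concentrated in even degrees, or more directly for one whose structure rules out an abelian-variety part, $\Aut^0(X)$ is affine. Here one can cite the hypothesis $H^{odd}(\Hess(S,h))=0$ (property (3) above) — in particular $H^1(\Hess(S,h);\mathcal{O})=0$ by Hodge theory — which forces the Albanese variety to be trivial and hence kills any abelian-variety quotient of $\Aut^0(\Hess(S,h))$, so that $\Aut^0(\Hess(S,h))$ is affine, i.e. linear algebraic.

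Once $\Aut^0(\Hess(S,h))$ is known to be a connected linear algebraic group over $\C$, the second assertion is a standard fact from the structure theory of such groups: any connected linear algebraic group $G$ over a field of characteristic zero has a unipotent radical $R_u(G)$, which is a connected normal unipotent subgroup, and a Levi decomposition $G = L \ltimes R_u(G)$ where $L$ is a (connected) reductive subgroup, unique up to conjugacy (Mostow's theorem). Thus $\Aut^0(\Hess(S,h))$ is the semidirect product of its reductive part $L$ and its unipotent part $R_u(G)$, as claimed.

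The main obstacle — really the only non-formal point — is verifying the hypotheses of \cite[Corollary 2.19]{brio18} that guarantee linearity (affineness) of $\Aut^0$, as opposed to just its existence as an algebraic group. For a general projective manifold the identity component of the automorphism group need only be an extension of an abelian variety (a quotient of the Albanese) by a linear group, so one genuinely needs the vanishing $H^1(\Hess(S,h);\mathcal{O}_{\Hess(S,h)})=0$, which follows from $H^{odd}(\Hess(S,h))=0$ and the Hodge decomposition on the smooth projective variety $\Hess(S,h)$. I expect the write-up to consist mostly of assembling these cited results in the right order, with the Hodge-theoretic vanishing being the one place where the specific geometry of $\Hess(S,h)$ (rather than pure generalities) enters.
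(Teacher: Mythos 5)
Your proposal is correct and follows essentially the same route as the paper: the paper's proof is exactly the observation that $H^1(\Hess(S,h))=0$ forces the Albanese variety to be trivial, so that \cite[Corollary 2.19]{brio18} gives linearity, with the semidirect product then coming from the standard Levi--Mostow decomposition. Your additional remarks on Grothendieck representability and the Hodge-theoretic step $H^{odd}=0\Rightarrow H^1(\mathcal{O})=0$ just make explicit what the paper leaves implicit.
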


\begin{proof}
Because $H^1(\Hess(S,h))=0$, the Albanese variety of $\Hess(S, h)$ is trivial.
Then the lemma follows from \cite[Corollary 2.19]{brio18}.
\end{proof}

One can easily see from the definition of $\Hess(S,h)$ that $\Hess(S,h)$ is isomorphic to $\Hess(PSP^{-1},h)$ for any $P\in \GL_n(\C)$.
Therefore, we may assume that $S$ is a diagonal matrix with distinct diagonal entries without loss of generality.
Then, the $\C^*$-torus $\T$ consisting of diagonal matrices in $\mathrm{GL}_n(\C)$ commutes with $S$ and the action of $\T$ on $\Fl(\C^n)$ preserves $\Hess(S,h)$.
Since the subgroup $\C^*$ of $\T$ consisting of scalar matrices acts trivially on $\Fl(\C^n)$, the action of $\T$ on $\Hess(S,h)$ induces the action of $\T/\C^*$ on $\Hess(S,h)$.
This induced action is effective when $\Hess(S,h)$ is connected, so we may think of $\T/\C^*$ as a subgroup of $\Aut^0(\Hess(S,h))$ when $\Hess(S,h)$ is connected.

\subsection{GKM graph of $\Hess(S,h)$} \label{subsec:GKM_graph}
Let $T$ be the maximal compact torus of $\T/\C^*$.
The $T$-fixed point set $\Hess(S,h)^T$ consists of permutation flags: 
\[
\e_w:=(\langle \e_{w(1)}\rangle \subset \langle \e_{w(1)},\e_{w(2)}\rangle \subset \cdots \subset \langle\e_{w(1)},\dots,\e_{w(n)}\rangle=\C^n)
\]
where $w$ is an element of the symmetric group $\mathfrak{S}_n$ on $[n]$, $\{\e_1,\dots,\e_n\}$ is the standard basis of $\C^n$, and $\langle \ \ \rangle$ means the linear subspace of $\C^n$ spanned by the elements in $\langle \ \ \rangle$.
We identify the permutation flag $\e_w$ with $w$.

Let $\widehat{T}$ be the maximal compact torus of $\T$, which consists of diagonal matrices $\mathrm{diag}(g_1,\dots,g_n)$ with $g_k\in S^1$ as the $k$-th diagonal entry.
Let ${\pi}_k\colon \widehat{T}\to S^1$ be the projection sending $\mathrm{diag}(g_1,\dots,g_n)$ to $g_k$.
Since $T=\widehat{T}/S^1$ where $S^1$ denotes the group of scalar matrices in $\widehat{T}$,
$\pi_k\pi_\ell^{-1}$ $(1\le k,\ell\le n)$ provides an element of $\Hom(T,S^1)$.
It is known and not difficult to see that the tangential $T$-module $T_w\Hess(S,h)$ at a fixed point $w$ is given by 
\[
T_w\Hess(S,h)=\bigoplus_{j<i\le h(j)}\C({\pi}_{w(i)}{\pi}_{w(j)}^{-1}), 
\] 
where $\C({\pi}_{w(i)}{\pi}_{w(j)}^{-1})$ denotes the complex one-dimensional $T$-module defined by ${\pi}_{w(i)}{\pi}_{w(j)}^{-1}$.

We denote by $t_k$ the element of $H^2(B\widehat{T})$ corresponding to $\pi_k\in \Hom(\widehat{T},S^1)$ through the identification $H^2(B\widehat{T})=\Hom(\widehat{T},S^1)$.
The projection $\widehat{T}\to T$ induces a monomorphism $H^2(BT)\to H^2(B\widehat{T})$ whose image is the submodule 
\begin{equation} \label{eq:submodule}
\left\{\sum_{i=1}^na_it_i \, \left| \, \sum_{i=1}^na_i=0,\ a_i\in \Z \right.\right\}.
\end{equation} 
Henceforth we think of $H^2(BT)$ as the submodule of $H^2(B\widehat{T})$.
The GKM graph associated with $\Hess(S,h)$ as a $T$-manifold, denoted by $(\Gamma_h,\alpha_h)$, is the following (see \cite{tymo08}):
\begin{enumerate}
\item $V(\Gamma_h)=\mathfrak{S}_n$,\quad $E(\Gamma_h)=\{(w,w(i,j))\mid w\in \mathfrak{S}_n,\ j<i\le h(j)\}$,
\item $\alpha_h(w,w(i,j))=t_{w(i)}-t_{w(j)}$ for $w\in \mathfrak{S}_n,\ j<i\le h(j)$,
\end{enumerate}
where $\alpha_h\colon E(\Gamma_h)\to H^2(BT)$ and $(i,j)$ denotes the transposition interchanging $i$ and $j$.
(Note that the GKM graph $(\Gamma_h,\alpha_h)$ is denoted by $\mathcal{G}_h$ in the introduction.)

\subsection{Toral rank of $\Hess(S,h)$}
When $\Hess(S,h)$ is connected, the $T$-action on $\Hess(S,h)$ is effective.
In this subsection we prove the following.

\begin{proposition} \label{prop:toral_rank_Hess}
When $\Hess(S,h)$ is connected, there is no compact torus in $\Aut(\Hess(S,h))$ which properly contains $T$; so the toral rank of $\Hess(S,h)$ is $n-1$.
\end{proposition}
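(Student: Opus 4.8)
The plan is to prove the equivalent statement that no compact torus of dimension $\ge n$ contains $T$: indeed, if $T'$ is a maximal compact torus of $\Aut^0(\Hess(S,h))$ containing $T$, then either $\dim T'=n-1$, so $T=T'$ is maximal and nothing properly contains it, or $\dim T'\ge n$, which we rule out. So suppose $T\subsetneq T'$ with $\dim T'=n$. Since $T$ is abelian it is normal in $T'$, so $T'$ preserves the finite set $\Hess(S,h)^T$ of permutation flags; as $T'$ is connected it fixes this set pointwise, so $\Hess(S,h)^{T'}=\Hess(S,h)^T$. Hence $\Hess(S,h)$ is a GKM manifold for $T'$ as well, with the same underlying graph $\Gamma_h$ and an axial function $\alpha'_h$ lifting $\alpha_h$ to the rank-$n$ lattice $H^2(BT')$, of full rank by effectiveness. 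All of the following takes place inside this picture.

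First dispose of the permutohedral case $h=(2,3,\dots,n,n)$: there $\dim_{\C}\Hess(S,h)=\sum_j(h(j)-j)=n-1$, and since the algebraic torus $\overline{T'}$ acts on the projective variety $\Hess(S,h)$ it has a fixed point $p$; the effective $T'$-action then embeds $T'$ into $\GL(T_p\Hess(S,h))=\GL(n-1,\C)$ (a subtorus acting trivially on $T_pM$ would act trivially on $M$), so $\dim T'\le n-1$, a contradiction. For the remaining $h$ I would use fixed-point components: for a subtorus $T_0\subsetneq T$ obtained from a ``block-scalar'' subtorus of $\widehat{T}$ (e.g. $\widehat{T}_0=\{\mathrm{diag}(g_1,\dots,g_n)\mid g_i=g_j\text{ for }i,j\text{ in a prescribed block, resp. blocks}\}$, and $T_0=\widehat{T}_0/\C^*$), a suitable connected component $N$ of $\Hess(S,h)^{T_0}$ is $T$-equivariantly isomorphic to a smaller regular semisimple Hessenberg variety, in some cases a flag variety $\Fl(\C^m)$ or a product of such, with $T/T_0$ acting as the corresponding maximal torus. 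Running an induction on $n$ — with base case precisely the fact that $\Aut^0(\Fl(\C^3))=\mathrm{PGL}_3(\C)$ has rank $2$, together with the trivial cases $\Fl(\C^1)$, $\Fl(\C^2)$ and the permutohedral case — one gets that $T/T_0$ is a maximal compact torus of $\Aut^0(N)$. Since $T'$ commutes with $T_0$ and is connected it preserves $N$, so it maps to $\Aut^0(N)$ with kernel a subtorus $K_N\supseteq T_0$; as $T/T_0\subseteq T'/K_N$ is already a maximal torus of $\Aut^0(N)$ we get $T'/K_N=T/T_0$, whence $\dim K_N=\dim T_0+1$, so $K_N$ properly contains $T_0$ and acts trivially on $N$.

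The contradiction I would then extract is a dimension comparison: $K_N$ acts trivially on $N$ but effectively on $\Hess(S,h)$, hence effectively and fibrewise on the normal bundle $\nu_N$, so $\dim K_N\le\dim_{\C}\nu_N=\dim_{\C}\Hess(S,h)-\dim_{\C}N$; combined with $\dim K_N=\dim T_0+1$ and a good choice of $N$ (as large as possible) this forces $\dim T_0+1>\dim_{\C}\Hess(S,h)-\dim_{\C}N$, absurd. The main obstacle is making this final comparison go through for every $h$: the ``generic'' components obtained by collapsing an initial block of coordinates already settle all but a few families, but for certain $h$ — most notably the extreme case $h=(n-1,n,\dots,n)$ — the largest accessible fixed components (which can be as big as $\Fl(\C^{n-1})$, sitting over an eigenline of $S$) are still too small for the crude dimension count to close. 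For these borderline $h$ one must instead exploit the finer GKM-combinatorics of the lift $\alpha'_h$ — the congruence relations force the new coordinate $c(e)$ of $\alpha'_h$ to vanish on many edges, which further constrains $K_N$ and, running over all such components $N$ simultaneously, pins down $K_N=T_0$ — or else invoke the separate analysis of the extreme case. I expect organizing the choice of $N$ and disposing of these borderline $h$ to be where essentially all of the work lies.
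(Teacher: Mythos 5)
Your reduction (a hypothetical compact torus $T'\supsetneq T$ of dimension $n$ has the same fixed points as $T$, hence yields a full-rank lift $\alpha'_h$ of the axial function to the rank-$n$ lattice $H^2(BT')$ on the same underlying graph $\Gamma_h$) is sound and matches the paper's starting point, and your disposal of the permutohedral case via the faithful isotropy representation at a fixed point is correct. But the heart of your argument does not close, as you yourself concede: the global strategy of choosing one large fixed-point component $N$ of a block subtorus and bounding $\dim K_N$ by the complex codimension of $N$ fails exactly for the borderline Hessenberg functions (e.g.\ $h=(n-1,n,\dots,n)$, where $\Hess(S,h)$ has complex codimension $1$ in $\Fl(\C^n)$ and no accessible component is large enough for the count), and the ``finer GKM-combinatorics of the lift'' that you defer to is precisely the missing content. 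As written, the proposal establishes the proposition only for those $h$ where the crude dimension comparison happens to succeed, so there is a genuine gap.

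The idea you are missing is to localize not to one large fixed component but to many three-dimensional ones, and to argue edge by edge instead of by a global dimension count. Let $H=\{(i,j)\mid j<i\le h(j)\}$ and let $\widetilde\alpha_{i,j}\in H^2(BT')$ denote the $T'$-label on the edge of $\Gamma_h$ at the identity vertex corresponding to the transposition $(i,j)$. For $(i,j)\in H$ with $i\ge j+2$, the boxes $(i-1,j)$ and $(i,i-1)$ also lie in $H$, and the codimension-two subtorus $K\subset T$ cut out by $t_{i-1}=t_i=t_j$ has a fixed-point component $Y$ through the identity flag whose GKM graph is $K_{3,3}$ on the six vertices $\mathrm{id}$, $(i,j)$, $(i-1,j)$, $(i,i-1)$, $(i-1,j)(i,j)$, $(i,i-1)(i,j)$; this $Y$ is isomorphic to $\Fl(\C^3)$, whose automorphism group $\mathrm{PGL}_3(\C)\rtimes\{\pm 1\}$ has toral rank $2$ (alternatively, Lemma~\ref{lemm:K33} applies directly to $Y$). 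Hence $\widetilde\alpha_{i,j}$ lies in the linear span of $\widetilde\alpha_{i-1,j}$ and $\widetilde\alpha_{i,i-1}$. An induction over the partial order on $H$ in which $(p,q)\prec(i,j)$ when $p\le i$ and $q\ge j$ --- whose minimal elements are the simple transpositions $(j+1,j)$, all present in $H$ because $\Hess(S,h)$ is connected --- then shows that every $\widetilde\alpha_{i,j}$ lies in the $(n-1)$-dimensional span $V$ of the simple-reflection labels, contradicting the full rank of $\alpha'_h$ when $\dim T'=n$. This argument is uniform in $h$, requires no case analysis, and uses only the single base fact about $\Fl(\C^3)$ that you had already isolated as your base case.
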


\begin{proof}
Let $\widetilde{T}$ be a compact torus in $\Aut(\Hess(S,h))$ containing $T$ and let $V$ be the linear span of the labels (w.r.t.~$\widetilde{T}$) on the edges (incident to the identity vertex $\mathrm{id}$) corresponding to the simple reflections $(j+1,j)$ for $j\in [n-1]$.
Those labels are $t_{j+1}-t_j$ when restricted to $T$ and the number of the simple reflections are $n-1$, so $\dim V=n-1$.
It suffices to prove the following claim because it implies $\dim\widetilde{T}=n-1$.
We note that the underlying graph of the GKM graph with respect to $\widetilde{T}$ is $\Gamma_h$.

\begin{claim}
The label (w.r.t.~$\widetilde{T}$) on an edge incident to $\mathrm{id}$ is in $V$.
\end{claim}

Proof of the claim.
Let 
\[
H:=\{(i,j)\in [n]\times [n] \mid j < i\le h(j)\}.
\] 
We denote the label on the edge (incident to $\mathrm{id}$) corresponding to $(i,j)\in H$ by $\widetilde{\alpha}_{i,j}$.
We consider a partial order $\prec$ on $H$ defined as follows: 
\begin{quote}
$(p,q)\prec (i,j)$ for $(p,q)\not=(i,j)$ if and only if $p\le i$ and $q\ge j$, 
in other words, $(p,q)$ is located in northeast of $(i,j)$ when elements in $H$ are viewed as positions in an $n\times n$ matrix.
\end{quote}
We shall prove $\widetilde{\alpha}_{i,j}\in V$ by induction on this partial order.
The minimal elements in $H$ with respect to the partial order $\prec$ are $(j+1,j)$ for $j\in [n-1]$ and $\widetilde{\alpha}_{j+1,j}\in V$ for any $j$ by definition of $V$.
Let $(i,j)\in H$ be different from $(j+1,j)$, so $i\ge j+2$.
Suppose that $\widetilde{\alpha}_{p,q}\in V$ for any $(p,q)\in H$ with $(p,q)\prec (i,j)$.
Then $\widetilde{\alpha}_{i-1,j},\ \widetilde{\alpha}_{i,i-1}\in V$ since both $(i-1,j)$ and $(i,i-1)$ are in $H$ and smaller than $(i,j)$ with respect to $\prec$.
The transpositions $(i,j), (i-1,j)$ and $(i,i-1)$ generate a group $S_{i,j}$ isomorphic to $\mathfrak{S}_3$ and the corresponding GKM subgraph $\mathcal{G}_{i,j}$ with vertices $S_{i,j}$ has $K_{3,3}$ as the underlying graph.

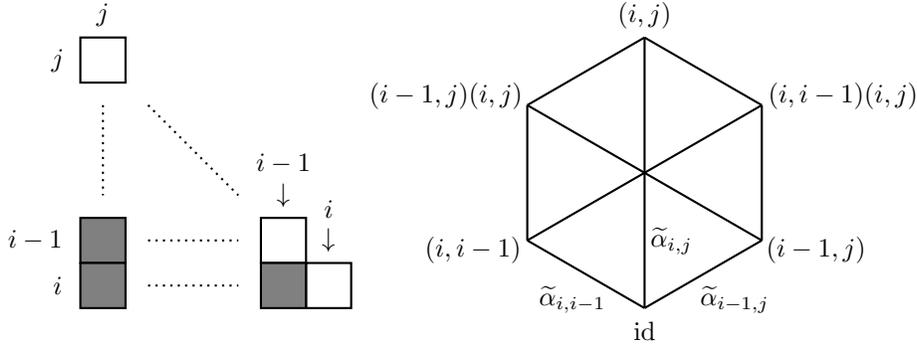
\begin{figure}[H]
\begin{tikzpicture}[scale = 0.6]
\draw[thick, dotted] (2.5,-2.5)--(4.5,-4.5);
\node at (0.5,-1.5) {$j$};
\node at (1.5,-0.5) {$j$};
\node at (0,-5.5) {$i-1$};
\node at (0.5,-6.5) {$i$};
\node at (5.5,-3.8) {$i-1$};
\node at (5.5,-4.5) {$\downarrow$};
\node at (6.5,-4.8) {$i$};
\node at (6.5,-5.5) {$\downarrow$};
%%%%%%%
\draw [thick] (1,-1) rectangle (2,-2);
\draw [thick, fill = gray] (1,-5) rectangle (2,-6);
\draw [thick, fill = gray] (1,-6) rectangle (2,-7);
\draw [thick] (5,-5) rectangle (6,-6);
\draw [thick, fill = gray] (5,-6) rectangle (6,-7);
\foreach \x in {7}
\draw [thick] (6,-\x+1) rectangle (7,-\x);
%%%%%%%
\draw[thick, dotted](2.5,-5.5)--(4.5,-5.5);
\draw[thick, dotted](2.5,-6.5)--(4.5,-6.5);
\draw[thick, dotted](1.5,-2.5)--(1.5,-4.5);
%%%%%%%
\begin{scope}[xshift= 13.5cm, yshift= -4cm]
\draw [thick] (0,3)--(2.598, 1.5);
\draw [thick] (2.598,1.5)--(2.598,-1.5);
\draw [thick] (2.598,-1.5)--(0,-3);
\draw [thick] (0,-3)--(-2.598,-1.5);
\draw [thick] (-2.598,-1.5)--(-2.598,1.5);
\draw [thick] (-2.598,1.5)--(0,3);
\draw [thick] (0,3)--(0,-3);
\draw [thick] (-2.598,1.5)--(2.598,-1.5);
\draw [thick] (-2.598,-1.5)--(2.598,1.5);
\draw(0,-3.5) node{$\mathrm{id}$};
\draw(0,3.5) node{$(i,j)$};
\draw(-3.8,-1.7) node{$(i,i-1)$};
\draw(-4.4,1.7) node{$(i-1,j)(i,j)$};
\draw(3.8,-1.7) node{$(i-1,j)$};
\draw(4.4,1.7) node{$(i,i-1)(i,j)$};
%%%%%%%
\draw(2,-2.8) node{$\widetilde{\alpha}_{i-1,j}$};
\draw(-1.6,-2.8) node{$\widetilde{\alpha}_{i,i-1}$};
\draw(0.6,-1.5) node{$\widetilde{\alpha}_{i,j}$};
\end{scope}
\end{tikzpicture}
\caption{GKM graph of $Y$ isomorphic to $\Fl(\C^3)$}
\end{figure}

Let $K$ be a codimension two subtorus of $T$ defined by $t_{i-1}=t_i=t_j$.
Then the $K$-fixed point set component $Y$ in $\Hess(S,h)$ containing $\mathrm{id}$ has $\mathcal{G}_{i,j}$ as the GKM graph.
In fact, $Y$ is isomorphic to $\Fl(\C^3)$.
Since $\Aut(\Fl(\C^3))$ is isomorphic to $\mathrm{PGL}_3(\C)\rtimes \{\pm 1\}$, its toral rank is $2$.
Therefore, $\widetilde{\alpha}_{i,j}$ lies in the linear span of $\widetilde{\alpha}_{i-1,j}$ and $\widetilde{\alpha}_{i,i-1}$ and hence in $V$ by induction assumption.
This completes the induction step, proving the claim and hence the proposition.
\end{proof}

\begin{remark} \label{rema:toral_rank}
(1) We may apply Lemma~\ref{lemm:K33} to $Y$ above instead of $\Aut(\Fl(\C^3))$ being of toral rank $2$.

(2) Since the $p$-toral rank of $\Aut(\Fl(\C^3))=\mathrm{PGL}_3(\C)\rtimes \{\pm 1\}$ is $2$ for any prime number $p$, the same argument as above shows that there is no $p$-torus in $\Aut(\Hess(S,H))$ which properly contains the $p$-torus of rank $n-1$ in $T$.
Here, a $p$-torus is an elementary abelian $p$-group and the $p$-toral rank of $\Aut(\Fl(\C^3))$ is the maximal rank of $p$-tori in $\Aut(\Fl(\C^3))$.
\end{remark}

\subsection{Non-abelian part of $\Aut^0(\Hess(S,h))$} 
The purpose of this subsection is to prove the following proposition.

\begin{proposition} \label{prop:abelian}
When $\Hess(S,h)$ is connected and not $\Fl(\C^n)$, there is no connected non-abelian compact Lie subgroup of $\Aut^0(\Hess(S,h))$ containing the torus $T$.
\end{proposition}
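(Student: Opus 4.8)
The plan is to derive Proposition~\ref{prop:abelian} from Proposition~\ref{prop:1}. First I note that $\Hess(S,h)$ is a GKM manifold: it is a smooth projective variety with $H^{odd}(\Hess(S,h))=0$, its $T$-fixed points are the isolated permutation flags $\e_w$, and the tangent weights at each $\e_w$ are pairwise linearly independent, as recorded in Subsection~\ref{subsec:GKM_graph}; moreover the $T$-action is effective because $\Hess(S,h)$ is connected, and $\Gamma_h$ is then connected since it contains every edge coming from a simple transposition. As a biregular automorphism is in particular a diffeomorphism preserving the complex structure, $\Aut^0(\Hess(S,h))\subseteq\Aut(\Hess(S,h))$ is contained in the group $\Aut(M)$ of Section~\ref{sect:GKM} with $M=\Hess(S,h)$. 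Hence, by Proposition~\ref{prop:1}, it suffices to prove that $\Aut^*(\Gamma_h,\alpha_h)$ is trivial, i.e.\ that every non-identity element of $\Aut(\Gamma_h,\alpha_h)$ acts nontrivially on $H^*(\Gamma_h,\alpha_h)\otimes\Q\cong H^*(\Hess(S,h))\otimes\Q$.

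Second, I would determine $\Aut(\Gamma_h,\alpha_h)$. For $u\in\mathfrak{S}_n$ the pair $(\varphi_u,\phi_u)$ with $\varphi_u(w)=uw$ and $\phi_u(t_k)=t_{u(k)}$ is an automorphism, since left translation carries the edge $(w,w(i,j))$ to $(uw,(uw)(i,j))$ and $\phi_u(t_{w(i)}-t_{w(j)})=t_{uw(i)}-t_{uw(j)}$; this embeds $\mathfrak{S}_n$ into $\Aut(\Gamma_h,\alpha_h)$. When $h$ is palindromic (i.e.\ $j<i\le h(j)$ implies $n+1-j\le h(n+1-i)$) there is in addition the commuting order-two element $\epsilon=(\varphi_\epsilon,-\mathrm{id})$ with $\varphi_\epsilon(w)=ww_0$, where $w_0$ is the longest element of $\mathfrak{S}_n$. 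I would then show there are no further automorphisms: the full-rank property of $\alpha_h$ together with the connectivity of $\Gamma_h$ forces the lattice automorphism $\phi$ to be $\pm$ a coordinate permutation, after which $\varphi$ is determined by the image of $\mathrm{id}$. This gives $\Aut(\Gamma_h,\alpha_h)\cong\mathfrak{S}_n$ or $\mathfrak{S}_n\rtimes\{\pm1\}$, matching $\Aut(\mathcal{G}_h)$ as announced in the introduction.

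Third comes the cohomological analysis. Under $H^*(\Gamma_h,\alpha_h)\otimes\Q\cong H^*(\Hess(S,h))\otimes\Q$, the subgroup $\mathfrak{S}_n$ acts, up to replacing each element by its inverse, as Tymoczko's dot action \cite{tymo08}; hence by \cite[Corollary~6.3]{AMS} (equivalently \cite[Theorem~6.3]{ki-le24}) no non-identity element of $\mathfrak{S}_n$ acts trivially on $H^*(\Hess(S,h))\otimes\Q$ when $\Hess(S,h)\ne\Fl(\C^n)$, so $\Aut^*(\Gamma_h,\alpha_h)\cap\mathfrak{S}_n=\{1\}$. Since $\Aut^*(\Gamma_h,\alpha_h)$ is normal in $\Aut(\Gamma_h,\alpha_h)$, and the only normal subgroup of $\mathfrak{S}_n\rtimes\{\pm1\}$ meeting $\mathfrak{S}_n$ trivially other than $\{1\}$ is $\{1,\epsilon\}$, it only remains to see that $\epsilon$, when present, acts nontrivially. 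Here I would work in graph cohomology with the degree-two classes $\xi_k$ ($k\in[n-1]$) represented by $w\mapsto\sum_{m=1}^{k}t_{w(m)}-\tfrac{k}{n}\sum_{i=1}^{n}t_i$: one checks $\epsilon^*\xi_k=\xi_{n-k}$, and $\xi_1\ne\xi_{n-1}$ in $H^2(\Gamma_h,\alpha_h)\otimes\Q$ because $\xi_1-\xi_{n-1}$ is not a constant function (note $n\ge3$ since $\Hess(S,h)$ is connected and distinct from $\Fl(\C^n)$). Thus $\epsilon\notin\Aut^*(\Gamma_h,\alpha_h)$, so $\Aut^*(\Gamma_h,\alpha_h)$ is trivial and Proposition~\ref{prop:1} finishes the proof.

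I expect the main obstacle to be the third step: identifying the action of $\Aut(\Gamma_h,\alpha_h)$ on graph cohomology with Tymoczko's dot action carefully enough to quote the external nontriviality statement, and then separately confirming that the orientation-reversing involution $\epsilon$ acts nontrivially on $H^2$ (rather than trivially, as an automorphism of a variety a priori might). The combinatorial determination of $\Aut(\Gamma_h,\alpha_h)$ in the second step is more tedious than hard, and the reduction to Proposition~\ref{prop:1} in the first step is formal.
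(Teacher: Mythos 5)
Your proposal is correct and follows essentially the same route as the paper: reduce via Proposition~\ref{prop:1} to showing $\Aut^*(\Gamma_h,\alpha_h)$ is trivial, determine $\Aut(\Gamma_h,\alpha_h)$ as $\mathfrak{S}_n$ or $\mathfrak{S}_n\rtimes\{\pm1\}$ (your condition on $h$ and your involution $\epsilon=(w\mapsto ww_0,\,-\mathrm{id})$ coincide with the paper's condition $(*)$ and generator $\Phi_0=\Phi_{w_0}\circ\epsilon$ up to a change of generators), kill the $\mathfrak{S}_n$ part via the dot-action identification and \cite[Corollary 6.3]{AMS}, and kill the involution by an explicit degree-two computation (your $\xi_1\mapsto\xi_{n-1}$ is the paper's $x_i\mapsto -x_{n+1-i}$). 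The only cosmetic deviations are your normal-subgroup shortcut, which replaces the paper's direct check that each $\Phi_\sigma\circ\Phi_0$ acts nontrivially, and your choice of representatives for the degree-two classes.
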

We accept this proposition and complete the proof of Theorem~\ref{theo:main}(1).
\begin{proof}[Proof of Theorem~$\ref{theo:main}(1)$]
Propositions~\ref{prop:toral_rank_Hess} and~\ref{prop:abelian} show that the maximal compact Lie subgroup of $\Aut^0(\Hess(S,h))$ is $T$ when $\Hess(S,h)$ is connected and not $\Fl(\C^n)$.
Since $\Aut^0(\Hess(S,h))$ is a linear algebraic group by Lemma~\ref{lemm:linear_algebraic_group}, this implies that the reductive part of $\Aut^0(\Hess(S,h))$ is $\T/\C^*$ in that case, which is Theorem~\ref{theo:main}(1) in the introduction.
\end{proof}

In the following, we show that $\Aut^*(\Gamma_h,\alpha_h)$ is trivial when $\Hess(S,h)$ is connected and not $\Fl(\C^n)$.
Then Proposition~\ref{prop:abelian} follows from Proposition~\ref{prop:1}.
Here are two types of elements in $\Aut(\Gamma_h,\alpha_h)$.
Recall that $H^2(BT)$ is regarded as the submodule \eqref{eq:submodule}, so it is generated by simple roots $t_j-t_{j+1}$ for $j\in [n-1]$.

\begin{example} \label{exam:auto}
(i) For $\sigma\in \mathfrak{S}_n$, we define $(\varphi_\sigma,\phi_\sigma)$ by 
\[
\begin{split}
&\varphi_\sigma(w):=\sigma w\quad \text{for } w\in V(\Gamma_h)=\mathfrak{S}_n,\\
&\phi_\sigma(t_i):=t_{\sigma(i)} \quad \text{for } i\in[n].
\end{split}
\]
Clearly $\varphi_\sigma$ is a graph automorphism of $\Gamma_h$ and 
\[
\begin{split}
\phi_\sigma(\alpha_h(w,w(i,j)))&=\phi_\sigma(t_{w(i)}-t_{w(j)})=t_{\sigma w(i)}-t_{\sigma w(j)}\\
&=\alpha_h(\sigma w,\sigma w(i,j))=\alpha_h(\varphi_\sigma(w, w(i,j))).
\end{split}
\]
Since $\phi_\sigma$ preserves $H^2(BT)$ in $H^2(B\widehat{T})$, 
$(\varphi_\sigma,\phi_\sigma)$ provides an element of $\Aut(\Gamma_h,\alpha_h)$.

\underbar{Note}.
It follows from \eqref{eq:action} that the action of $(\varphi_\sigma,\phi_\sigma)$ on $\xi\in H^*_T(\Gamma_h,\alpha_h)$ is given by
\[
((\varphi_\sigma,\phi_\sigma)^*\xi)(w)
=\phi_\sigma^{-1}(\xi(\varphi_\sigma(w)))=\phi_\sigma^{-1}(\xi(\sigma w))
=\sigma^{-1}(\xi(\sigma w))
\]
where the action of $\sigma$ on $H^*(B\widehat{T})$ is induced from $\sigma(t_i)=t_{\sigma(i)}$.
This shows that the action of $(\varphi_\sigma,\phi_\sigma)$ on $H^*_T(\Gamma_h,\alpha_h)$ is the same as the dot action of $\sigma^{-1}\in \mathfrak{S}_n$, where the dot action (introduced by Tymoczko \cite{tymo08}) is 
\[
(\tau\cdot \xi)(w):=\tau(\xi(\tau^{-1}w)) \qquad \text{for } \tau\in \mathfrak{S}_n.
\]

(ii) Let $w_0$ denote the longest element in $\mathfrak{S}_n$, so $w_0(i) = n+1-i$ for any $i\in [n]$.
Suppose that 
\[
w_0(i)<w_0(j)\le h(w_0(i)) \text{ whenever } j<i\le h(j), \tag{\text{$*$}}
\]
in other words, the configuration on a square grid of size $n\times n$, which consists of boxes in the $i$-th row and the $j$-th column with $i\le h(j)$, remains unchanged when flipped along the anti-diagonal.
Then we define $(\varphi_0,\phi_0)$ by 
\[
\begin{split}
&\varphi_0(w)=w_0ww_0\quad \text{for } w\in V(\Gamma_h)=\mathfrak{S}_n,\\ 
&\phi_0(t_i)=-t_{w_0(i)}(=-t_{n+1-i}).
\end{split}
\] 
Since $w_0^2$ is the identity and $w_0(i,j)w_0=(w_0(i),w_0(j))=(w_0(j),w_0(i))$, we have 
\[
\varphi_0(w,w(i,j))=(\varphi_0(w),\varphi_0(w(i,j)))=(\varphi_0(w),\varphi_0(w)(w_0(j),w_0(i))).
\]
Therefore, the assumption $(*)$ above ensures that $\varphi_0$ preserves edges of $\Gamma_h$, so that $\varphi_0$ is a graph automorphism of $\Gamma_h$.
Moreover, 
\[
\begin{split}
\phi_0(\alpha_h(w,w(i,j)))&=\phi_0(t_{w(i)}-t_{w(j)})=-t_{w_0 w(i)}+t_{w_0 w(j)}\\
&=t_{w_0ww_0(w_0(j))}-t_{w_0ww_0(w_0(i))}\\
&=\alpha_h(\varphi_0(w),\varphi_0(w)(w_0(j),w_0(i)))\\
&=\alpha_h(\varphi_0(w,w(i,j))).
\end{split}
\]
Since $\phi_0$ preserves $H^2(BT)$ in $H^2(B\widehat{T})$, $(\varphi_0,\phi_0)$ provides an element of $\Aut(\Gamma_h,\alpha_h)$ when $(*)$ is satisfied.
Clearly, $(\varphi_0,\phi_0)$ is an involution.

\underbar{Note}.
Let $\Omega$ be the automorphism of $\mathrm{GL}_n(\C)$ defined by
\[
\Omega(A)= w_0(A^T)^{-1}w_0\qquad \text{for } A\in \mathrm{GL}_n(\C),
\]
where $A^T$ denotes the transpose of $A$ and $w_0$ is regarded as the anti-diagonal permutation matrix.
Since $\Omega$ preserves the upper triangular subgroup $B$, it induces an automorphism $\omega$ of $\Fl(\C^n)=\mathrm{GL}_n(\C)/B$, which corresponds to $-1$ in the factor of $\Aut(\Fl(\C^n))=\mathrm{PGL}_n(\C)\rtimes \{\pm 1\}$.
One can see that $\omega$ induces the automorphism $(\varphi_0,\phi_0)$ of $(\Gamma_h,\alpha_h)$ when $\Hess(S,h)=\Fl(\C^n)$.
\end{example}

\begin{lemma} \label{lemm:Aut_Hess}
Set $\Phi_\sigma=(\varphi_\sigma,\phi_\sigma)$ for $\sigma\in \mathfrak{S}_n$ and $\Phi_0=(\varphi_0,\phi_0)$.
Then 
\[
\Aut(\Gamma_h,\alpha_h)=
\begin{cases}
	\langle \Phi_\sigma,\ \Phi_0\mid \sigma\in \mathfrak{S}_n\rangle \quad &\text{if $(*)$ is satisfied,}\\
	\{ \Phi_\sigma\mid \sigma\in \mathfrak{S}_n\} \quad &\text{if $(*)$ is not satisfied.}
\end{cases}
\]
(The former group is isomorphic to $\mathfrak{S}_n\rtimes \langle w_0\rangle$ where $w_0$ acts on $\mathfrak{S}_n$ by conjugation.) 
\end{lemma}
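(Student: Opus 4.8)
The plan is to realize $\{\Phi_\sigma\mid\sigma\in\mathfrak{S}_n\}$ as a copy of $\mathfrak{S}_n$ inside $\Aut(\Gamma_h,\alpha_h)$ acting simply transitively on $V(\Gamma_h)=\mathfrak{S}_n$, and thereby reduce everything to computing the stabilizer of the identity vertex $\mathrm{id}$. From the formulas of Example~\ref{exam:auto}(i) one reads off $\Phi_\sigma\Phi_\tau=\Phi_{\sigma\tau}$ and $\varphi_\sigma(\mathrm{id})=\sigma$, so $\sigma\mapsto\Phi_\sigma$ is an injective homomorphism, and for an arbitrary $(\varphi,\phi)\in\Aut(\Gamma_h,\alpha_h)$ the composite $\Phi_{\varphi(\mathrm{id})^{-1}}\circ(\varphi,\phi)$ fixes $\mathrm{id}$. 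Hence it is enough to prove that the stabilizer of $\mathrm{id}$ equals $\{\mathrm{id},\Phi_0\}$ when $(*)$ holds and $\{\mathrm{id}\}$ otherwise. Granting this, a one-line computation gives $\Phi_0\Phi_\sigma\Phi_0=\Phi_{w_0\sigma w_0}$; since $\Phi_0$ is an involution not of the form $\Phi_\tau$ (it acts on $H^2(BT)$ with a sign), the displayed description of $\Aut(\Gamma_h,\alpha_h)$ follows, the first group being $\{\Phi_\sigma\}\rtimes\langle\Phi_0\rangle\cong\mathfrak{S}_n\rtimes\langle w_0\rangle$.

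So let $(\varphi,\phi)$ be an automorphism with $\varphi(\mathrm{id})=\mathrm{id}$; I first analyze $\phi$. Since distinct edges at a vertex carry distinct labels, $\phi$ must permute the label set $\Lambda_{\mathrm{id}}=\{\,t_i-t_j\mid j<i\le h(j)\,\}$. Putting $\alpha_k:=t_k-t_{k+1}$ and writing $t_i-t_j=-(\alpha_j+\alpha_{j+1}+\cdots+\alpha_{i-1})$ for $i>j$, this says $\phi$ preserves the set $\Phi_h^+$ of positive roots of type $A_{n-1}$ corresponding to $\Lambda_{\mathrm{id}}$. Two features of $\Phi_h^+$ drive the argument: because $h$ is nondecreasing, $\Phi_h^+$ is closed under passing to sub-interval roots, and because $\Hess(S,h)$ is connected (so $h(j)\ge j+1$ for $j\in[n-1]$) it contains every simple root $\alpha_j$. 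Now every non-simple element of $\Phi_h^+$ is a sum of two elements of $\Phi_h^+$ (peel off its lowest simple root), whereas a simple root is never a sum of two positive roots; as $\phi$ and $\phi^{-1}$ are linear and preserve $\Phi_h^+$, they must therefore permute the simple roots, say $\phi(\alpha_j)=\alpha_{\pi(j)}$ for a permutation $\pi$ of $[n-1]$. Comparing the label $-\alpha_j$ on the edge from $\mathrm{id}$ to the simple reflection $s_j=(j,j+1)$ with the label of its $\varphi$-image then gives $\varphi(s_j)=s_{\pi(j)}$.

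Next I would pin down $\pi$ and propagate. For $1\le j\le n-2$, connectedness gives $h(j+1)\ge j+2$, so $(s_j,\,s_js_{j+1})$ is an edge of $\Gamma_h$ whose label is $-(\alpha_j+\alpha_{j+1})$; its image under $\varphi$ is again an edge, so its label $\phi\!\left(-(\alpha_j+\alpha_{j+1})\right)=-(\alpha_{\pi(j)}+\alpha_{\pi(j+1)})$ must be $\pm$ a root of $A_{n-1}$, which forces $|\pi(j)-\pi(j+1)|=1$. Hence $\pi$ is either the identity or the reversal $j\mapsto n-j$, and correspondingly $\phi$ is the identity or $\phi_0$. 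In each case one finishes by induction on the length $\ell(w)$ of $w\in\mathfrak{S}_n$: writing $w=vs_j$ with $\ell(v)=\ell(w)-1$, invoking the inductive value of $\varphi(v)$, and using that the edge $(v,vs_j)$ always belongs to $\Gamma_h$ and is the unique edge at $v$ with its label, one checks $\varphi(w)=w$ (respectively $\varphi(w)=w_0ww_0=\varphi_0(w)$). Thus $(\varphi,\phi)=\mathrm{id}$ or $(\varphi,\phi)=\Phi_0$, and in the latter case $\Phi_0\in\Aut(\Gamma_h,\alpha_h)$, which by Example~\ref{exam:auto}(ii) happens precisely when $(*)$ holds. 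This yields the stabilizer computation and hence the lemma.

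The step I expect to be the main obstacle is the rigidity argument of the last two paragraphs, and the two delicate points there are the following. First, that fixing $\mathrm{id}$ really forces $\phi(\Lambda_{\mathrm{id}})=\Lambda_{\mathrm{id}}$ as a set, not merely up to sign: this uses $\Lambda_{\mathrm{id}}\cap(-\Lambda_{\mathrm{id}})=\emptyset$, and it is what makes it legitimate to replace $\Lambda_{\mathrm{id}}$ by a genuine system of positive Hessenberg roots and run the indecomposability argument for simple roots. Second, that the constraint $|\pi(j)-\pi(j+1)|=1$ comes from the two-step path $\mathrm{id}\to s_j\to s_js_{j+1}$ and crucially uses that $\varphi$ carries edges to honest edges, so that applying $\phi$ to a root-valued label produces a root-valued label again. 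The remaining ingredients — distinctness of labels at a vertex, the two length inductions, and the assembly of $\mathfrak{S}_n\rtimes\langle w_0\rangle$ — are routine.
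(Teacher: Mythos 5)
Your proof is correct and follows essentially the same route as the paper: compose with $\Phi_{\varphi(\mathrm{id})}^{-1}$ to reduce to the stabilizer of the vertex $\mathrm{id}$, show that $\phi$ must permute the simple roots (via indecomposability in the Hessenberg positive system, which uses connectedness and the monotonicity of $h$), deduce that $\phi$ is the identity or $\phi_0$, and conclude that $\varphi$ is then forced. The only real variation is that you pin down the simple-root permutation using the two-step paths $\mathrm{id}\to s_j\to s_js_{j+1}$, whereas the paper invokes the fact that every root of $A_{n-1}$ occurs as a label so that $\phi$ is a Dynkin diagram automorphism; both work, and your explicit induction on length makes precise the paper's final assertion that an automorphism fixing $\mathrm{id}$ and acting trivially on $H^2(BT)$ is the identity.
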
 

\begin{proof}
Let $\Phi=(\varphi,\phi)$ be any element of $\Aut(\Gamma_h,\alpha_h)$.
Let $\sigma=\varphi(\mathrm{id})$ where $\mathrm{id}$ is the identity element of $\mathfrak{S}_n$ as before.
Then, since $(\varphi_\sigma^{-1}\circ\varphi)(\mathrm{id})=\mathrm{id}$,
the automorphism $\phi_\sigma^{-1}\circ\phi$ of $H^2(BT)$ permutes the labels on the edges emanating from $\mathrm{id}$,
which are $t_i-t_j$ for $j<i\le h(j)$.
Here, (negative) simple roots $t_{j+1}-t_{j}$ $(j\in [n-1])$ appear in the labels because $h(j)\ge j+1$ for any $j\in [n-1]$ by the connectedness assumption on $\Hess(S,h)$.
Moreover, any root $t_i-t_j$ for $j<i\le h(j)$ is a sum of the (negative) simple roots $t_{j+1}-t_j$ $(j\in [n-1])$.
Therefore, $\phi^{-1}_\sigma\circ\phi$ must permute the (negative) simple roots.

Recall that all the labels on the edges of $(\Gamma_h,\alpha_h)$ are roots in type $A_{n-1}$, i.e.\ $t_k-t_\ell$ $(1\le k\not=\ell\le n)$, and every root in type $A_{n-1}$ appears as a label (see subsection~\ref{subsec:GKM_graph}).
Since $\phi^{-1}_\sigma\circ\phi$ maps roots to roots,
it must be the identity or $\phi_0$ on $H^2(BT)$, where the latter case occurs only when $(*)$ is satisfied.
Thus, the automorphism $\Phi_\sigma^{-1}\circ \Phi$ or $\Phi_0^{-1}\circ\Phi_\sigma^{-1}\circ\Phi$ fixes the vertex $\mathrm{id}$ and induces the identity on $H^2(BT)$.
Therefore, the automorphism must be the identity, proving the lemma.
\end{proof}

For each $i\in [n]$, the element ${\hat x}_i\in \mathrm{Map}(V(\Gamma_h),H^*(B\widehat{T}))$ defined by 
\[
\hat x_i(w):=t_{w(i)}\qquad (w\in V(\Gamma_h)=\mathfrak{S}_n)
\]
provides an element of $H^2_{\widehat{T}}(\Hess(S,h))$ by GKM theory, where the axial function is the same as $\alpha_h$ but with $H^2(B\widehat{T})$ as the target space in place of $H^2(BT)$.
We denote by $x_i$ the image of $\hat{x}_i$ through the natural map $H^2_{\widehat{T}}(\Hess(S,h)) \to H^2(\Hess(S,h))=H^2(\Gamma_h,\alpha_h)$.
Each $x_i$ is fixed under the action of $\Phi_\sigma$ because 
\[
\begin{split}
((\varphi_\sigma,\phi_\sigma)^* {\hat x}_i)(w)
&=\phi_\sigma^{-1}({\hat x}_i(\varphi_\sigma(w)))
=\phi_\sigma^{-1}({\hat x}_i(\sigma w))\\
&=\phi_\sigma^{-1}(t_{\sigma w(i)})=t_{w(i)}={\hat x}_i(w).
\end{split}
\]
\begin{remark}
One can define $x_i$ as the image of some element of $H^2_T(\Hess(S,h))=H^2_T(\Gamma_h,\alpha_h)$, i.e., without using $\widehat{T}$.
However, it is natural to define $x_i$ as the image of $\hat{x}_i$.
Indeed, $\hat{x}_i$ has the following geometrical meaning.
For each $1\le i\le n$, we have a tautological vector bundle $E_i\to \Fl(\C^n)$ of complex dimension $i$ defined by 
\[
E_i:=\{((V_1\subset V_2\subset \cdots\subset V_n), v)\in \Fl(\C^n)\times \C^n\mid v\in V_i\}.
\]
This vector bundle has a natural action of $\widehat{T}$ (but not of $T$).
The $\hat{x}_i$ is the equivariant first Chern class of the complex $\widehat{T}$-line bundle $E_i/E_{i-1}$ restricted to $\Hess(S,h)$, where $E_0=\Fl(\C^n)\times \{0\}$.
As is well-known, when $\Hess(S,h)=\Fl(\C^n)$, 
\[
H^*(\Fl(\C^n))=\Z[x_1,\dots,x_n]/(e_1(x),\dots,e_n(x))
\]
where $e_i(x)$ denotes the $i$-th elementary symmetric polynomial in $x_1,\dots,x_n$.
\end{remark}

Recall that $\Aut^*(\Gamma_h,\alpha_h)$ is the subgroup of $\Aut(\Gamma_h,\alpha_h)$ acting on $H^*(\Gamma_h,\alpha_h)$ trivially.

\begin{lemma} \label{lemm:Aut*Hess}
Suppose that $\Hess(S,h)$ is connected.
If $\Hess(S,h)=\Fl(\C^n)$, then $\Aut^*(\Gamma_h,\alpha_h)=\{\Phi_\sigma\mid \sigma\in \mathfrak{S}_n\}$.
Otherwise $\Aut^*(\Gamma_h,\alpha_h)$ is trivial.
\end{lemma}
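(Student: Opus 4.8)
The plan is to analyze the action of the two types of generators of $\Aut(\Gamma_h,\alpha_h)$ (described in Lemma~\ref{lemm:Aut_Hess}) on the classes $x_1,\dots,x_n\in H^2(\Gamma_h,\alpha_h)$, and to use the classes $x_i$ together with the known ring structure of $H^*(\Gamma_h,\alpha_h)$ to separate elements. We have already observed that every $\Phi_\sigma$ fixes each $x_i$, so $\{\Phi_\sigma\mid\sigma\in\mathfrak S_n\}$ is contained in $\Aut^*(\Gamma_h,\alpha_h)$; the content of the lemma is the reverse inclusion, i.e.\ that when $(*)$ holds, $\Phi_0$ (and hence any coset representative $\Phi_0\Phi_\sigma$) acts \emph{nontrivially} on $H^*(\Gamma_h,\alpha_h)$, unless $\Hess(S,h)=\Fl(\C^n)$.

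First I would compute $\Phi_0^*\hat x_i$. From \eqref{eq:action} and the definitions $\varphi_0(w)=w_0ww_0$, $\phi_0(t_i)=-t_{w_0(i)}$, one gets
\[
(\Phi_0^*\hat x_i)(w)=\phi_0^{-1}(\hat x_i(w_0ww_0))=\phi_0^{-1}(t_{w_0ww_0(i)})=-t_{w_0w_0ww_0(i)}=-t_{ww_0(i)},
\]
so that $\Phi_0^*x_i=-x_{w_0(i)}=-x_{n+1-i}$ in $H^2(\Gamma_h,\alpha_h)$. Thus $\Phi_0$ acts on the span of the $x_i$ by $x_i\mapsto-x_{n+1-i}$. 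If $\Hess(S,h)\ne\Fl(\C^n)$, this action is already nontrivial on $H^2$ \emph{provided} the $x_i$ are not annihilated by whatever relations hold — more precisely, provided $\Phi_0^*$ does not agree with the identity after passing to the quotient. When $\Hess(S,h)=\Fl(\C^n)$ we have $H^*(\Fl(\C^n))=\Z[x_1,\dots,x_n]/(e_1,\dots,e_n)$, and there $x_i\mapsto -x_{n+1-i}$ together with $\sum x_i=0$ (which holds mod the ideal) does give back the identity on $H^2$ in a suitable sense — actually it does \emph{not} fix $x_i$ individually, so in the flag-variety case the relevant statement is that the full group acting trivially is exactly $\{\Phi_\sigma\}$, and one must check $\Phi_0$ is outside it; but Lemma~\ref{lemm:Aut*Hess} as stated asserts $\Aut^*=\{\Phi_\sigma\}$ for the flag variety, so what is really needed there is that $\Phi_0$ acts nontrivially on $H^*(\Fl(\C^n))$, which is classical (it is the outer automorphism). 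Hence the real work is the case $\Hess(S,h)\subsetneq\Fl(\C^n)$.

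So, concretely: suppose $(*)$ holds and $\Hess(S,h)\ne\Fl(\C^n)$; I must show $\Phi_0^*$ is not the identity on $H^*(\Gamma_h,\alpha_h)\otimes\Q$. Equivalently, using Lemma~\ref{lemm:Aut_Hess} and the fact that the $\Phi_\sigma$ already act trivially, I must show that no coset $\Phi_0\Phi_\sigma$ acts trivially; it suffices to show $\Phi_0^*$ itself is nontrivial, since $\Phi_\sigma^*=\mathrm{id}$. The plan is to find an explicit cohomology class moved by $\Phi_0^*$. A natural candidate is built from the $x_i$: since $\Phi_0^*x_i=-x_{n+1-i}$, any asymmetric degree-one combination, e.g.\ $x_1$ itself, is a candidate — one needs only verify $x_1\ne -x_n$ in $H^2(\Gamma_h,\alpha_h)\otimes\Q$. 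This is where I expect the main obstacle: pinning down exactly which linear relations among the $x_i$ hold in $H^2(\Gamma_h,\alpha_h)$ and checking $x_1+x_n\ne 0$ there. I would handle it by working in the graph model: a linear relation $\sum c_i x_i=0$ in $H^2(\Gamma_h,\alpha_h)$ means $\sum c_i\hat x_i$ lies in the image of $H^2(BT)\to H^2_T(\Gamma_h,\alpha_h)$, i.e.\ $w\mapsto\sum_i c_i t_{w(i)}$ is a constant (independent of $w$) element of the submodule \eqref{eq:submodule}; evaluating at $w=\mathrm{id}$ and at a transposition shows all $c_i$ are equal, so the only relation is $\sum x_i=0$ (the subtle point being whether this relation already holds — it does, since $\sum_i\hat x_i(w)=\sum_i t_i$ is constant). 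Therefore $x_1+x_n=0$ would force, mod the one-dimensional relation space, $2x_1=\sum x_i=0$, hence $x_1=0$ in $H^2\otimes\Q$; but $\hat x_1$ is not in the image of $H^2(BT)$ (again evaluate $w\mapsto t_{w(1)}$ at $\mathrm{id}$ and at $(1\,2)$: the values $t_1$ and $t_2$ differ), so $x_1\ne 0$ and $x_1+x_n\ne0$. Thus $\Phi_0^*x_1=-x_n\ne x_1$, so $\Phi_0\notin\Aut^*(\Gamma_h,\alpha_h)$, and together with Lemma~\ref{lemm:Aut_Hess} this gives $\Aut^*(\Gamma_h,\alpha_h)=\{\Phi_\sigma\mid\sigma\in\mathfrak S_n\}$ when $\Hess(S,h)=\Fl(\C^n)$ and the trivial group otherwise. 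The only remaining care is the flag-variety case, where one instead invokes the classical fact that the outer automorphism $\omega$ acts nontrivially on $H^*(\Fl(\C^n))$ (equivalently, its induced map on $H^2$ sends $x_i\mapsto -x_{n+1-i}$, which is not the identity since $n\ge2$), so that there $\Aut^*$ is exactly the index-two subgroup $\{\Phi_\sigma\}$.
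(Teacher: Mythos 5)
Your treatment of the full flag variety case is essentially the paper's argument and is fine: the $x_i$ generate $H^*(\Fl(\C^n))$, every $\Phi_\sigma$ fixes them, and $\Phi_0^*x_i=-x_{n+1-i}$ is not the identity because the only linear relation among the $x_i$ in $H^2$ is $\sum_i x_i=0$ (your self-contained verification of this by evaluating $\sum_ic_i\hat x_i$ at $\mathrm{id}$ and at transpositions is a reasonable substitute for the paper's citation of \cite{AMS}, modulo the small point that the kernel of $H^2_{\widehat T}\to H^2$ is the image of $H^2(B\widehat T)$, not of $H^2(BT)$ --- which is exactly why $\sum_ix_i=0$ does hold).

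The proof of the second half of the lemma, however, has a fatal gap. You assert that ``every $\Phi_\sigma$ fixes each $x_i$, so $\{\Phi_\sigma\}$ is contained in $\Aut^*(\Gamma_h,\alpha_h)$'' and later that ``$\Phi_\sigma^*=\mathrm{id}$,'' reducing everything to showing $\Phi_0^*\neq\mathrm{id}$. This is both self-contradictory (if $\{\Phi_\sigma\}\subset\Aut^*$ always, then $\Aut^*$ cannot be trivial in the non-flag case, which is the very statement to be proved) and false: when $\Hess(S,h)\neq\Fl(\C^n)$ the classes $x_1,\dots,x_n$ span only an $(n-1)$-dimensional subspace of $H^2(\Hess(S,h))$ and do \emph{not} generate $H^*(\Gamma_h,\alpha_h)$ as a ring (already for the permutohedral surface, $n=3$, the rank of $H^2$ is $4$), so fixing the $x_i$ says nothing about the action on the rest of the cohomology. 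The action of $\Phi_\sigma$ on $H^*_T(\Gamma_h,\alpha_h)$ is Tymoczko's dot action of $\sigma^{-1}$, and the essential input --- which your argument never invokes and which cannot be extracted from the $x_i$ alone --- is that for $\Hess(S,h)\neq\Fl(\C^n)$ the dot action on $H^*(\Hess(S,h))$ contains a copy of the standard $\mathfrak{S}_n$-representation (\cite[Corollary 6.3]{AMS} or \cite[Theorem 6.3]{ki-le24}), so that $\Phi_\sigma\notin\Aut^*(\Gamma_h,\alpha_h)$ for every $\sigma\neq\mathrm{id}$. Your computation of $\Phi_0^*x_i=-x_{n+1-i}$ then disposes of the cosets $\Phi_\sigma\Phi_0$, but without the nontriviality of the dot action the ``otherwise trivial'' half of the lemma remains unproved.
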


\begin{proof}
The sum $\sum_{i=1}^n x_i=e_1(x)$ vanishes in $H^2(\Gamma_h,\alpha_h)$ but there is no other relation among $x_i$'s in $H^2(\Gamma_h,\alpha_h)$, see \cite{AMS} for example.
The action of $\Phi_\sigma$ on $x_i$ is trivial as observed above but $\Phi_0 x_i=-x_{n+1-i}$ because 
\[
\begin{split}
((\varphi_0,\phi_0)^*\hat{x}_i)(w)&=\phi_0^{-1}({\hat x}_i(\varphi_0(w)))=\phi_0^{-1}({\hat x}_i(w_0ww_0))=\phi_0^{-1}(t_{w_0ww_0(i)})\\
&=\phi_0^{-1}(t_{n+1-w(n+1-i)})=-t_{w(n+1-i)}=-{\hat x}_{n+1-i}(w).
\end{split}
\] 
Since $H^*(\Fl(\C^n))$ is generated by $x_i$'s as a ring, this together with Lemma~\ref{lemm:Aut_Hess} proves the former statement in the lemma.

When $\Hess(S,h)\not=\Fl(\C^n)$, $H^*(\Gamma_h,\alpha_h)$ under the dot action of $\mathfrak{S}_n$ contains an $\mathfrak{S}_n$-module isomorphic to the standard $\mathfrak{S}_n$-module (\cite[Corollary 6.3]{AMS} or \cite[Theorem 6.3]{ki-le24}),
which means that $\Phi_\sigma$ is not in $\Aut^*(\Gamma_h,\alpha_h)$ unless $\sigma=\mathrm{id}$.
When $(*)$ is satisfied, $\Phi_\sigma\circ \Phi_0$ is in $\Aut(\Gamma_h,\alpha_h)$ by Lemma~\ref{lemm:Aut_Hess} but since 
\[
(\Phi_\sigma\circ\Phi_0)(x_i)=\Phi_\sigma(-x_{n+1-i})=-x_{n+1-i},
\]
$\Phi_\sigma\circ\Phi_0$ is not in $\Aut^*(\Gamma_h,\alpha_h)$.
This together with Lemma~\ref{lemm:Aut_Hess} proves the latter statement in the lemma.
\end{proof}

Proposition~\ref{prop:abelian} follows from Proposition~\ref{prop:1} and Lemma~\ref{lemm:Aut*Hess}.

\subsection{Proof of Theorem~\ref{theo:main}(2)}
Since $\pi_0{\Aut(\Hess(S,h))}$ is a finite group by Theorem~\ref{theo:main2},
there exists a maximal compact subgroup $G$ of $\Aut(\Hess(S,h))$ such that $\Aut(\Hess(S,h))/G$ is homeomorphic to a Euclidean space and that maximal compact subgroups are conjugate to each other by inner automorphisms (\cite[Chapter VII]{bore98}).
Therefore, $\pi_0{\Aut(\Hess(S,h))}$ is isomorphic to $G/G^0$, where $G^0$ denotes the identity component, and we may think of $G^0$ as $T$ by Proposition~\ref{prop:abelian}.
Since the normalizer of $G^0$ in $G$ is the entire $G$, any element $g$ of $G$ induces an automorphism of the GKM graph $(\Gamma_h,\alpha_h)$, so that we obtain a homomorphism 
\[
\Psi\colon G\to \Aut(\Gamma_h,\alpha_h).
\]
We know that $\ker\Psi$ contains $G^0$ and Lemma~\ref{lemm:2} shows that $\ker\Psi$ is abelian.
Therefore, if $\ker\Psi$ properly contains $G^0$, then $\ker\Psi$ is isomorphic to $G^0\times K$ for some non-trivial finite abelian group $K$.
Therefore, there exists a $p$-torus in $\ker\Psi$ which properly contains the $p$-torus of rank $n-1$ in $G^0=T$ for some prime number $p$.
This contradicts Remark~\ref{rema:toral_rank}(2), so $\ker\Psi=G^0$ and hence $\Psi$ induces a monomorphism
\[
G/G^0\hookrightarrow \Aut(\Gamma_h,\alpha_h).
\] 
Since $G/G^0$ is isomorphic to $\pi_0{\Aut(\Hess(S,h))}$ as remarked above, this proves Theorem~\ref{theo:main}(2).

\section{An observation} \label{sect:unipotent}

The general linear group $\GL_n(\C)$ naturally acts on $\Fl(\C^n)$.
Let $I$ be the identity matrix in $\GL_n(\C)$ and $E_{ij}$ $(1\le i\not=j\le n)$ the matrix of order $n$ with $1$ in the $(i,j)$ entry and $0$ otherwise.
We shall observe that the restricted action of a unipotent subgroup 
\[
U_{ij}:=\{I+cE_{ij}\mid c\in \C\}\subset \GL_n(\C)
\]
does not preserve $\Hess(S,h)$ when $\Hess(S,h)\not=\Fl(\C^n)$, where $S$ is a diagonal matrix with distinct diagonal entries as before.

Let $H$ be the vector space consisting of square matrices $(b_{ij})$ of order $n$ such that $b_{ij}=0$ with $i>h(j)$.
Through the identification $\Fl(\C^n)=\GL_n(\C)/B$, where $B$ denotes the Borel subgroup of $\GL_n(\C)$ consisting of upper triangular matrices as before, one has 
\[
\Hess(S,h)=\{gB\in \GL_n(\C)/B \mid g^{-1}Sg\in H\}.
\] 
Indeed, to $g=[\v_1,\v_2,\dots,\v_n]\in \GL_n(\C)$ we associate a complete flag in $\C^n$:
\[
\langle \v_1\rangle\subset \langle \v_1,\v_2\rangle \subset \cdots \subset \langle \v_1,\v_2,\dots,\v_n\rangle=\C^n
\]
where $\langle \ \ \rangle$ denotes the vector space spanned by the vectors therein.
This induces the identification $\GL_n(\C)/B=\Fl(\C^n)$ and the condition $SV_i\subset V_{h(i)}$ $(\forall i\in [n])$ is equivalent to $Sg\in gH$, i.e.~ $g^{-1}Sg\in H$.

Suppose that the restricted action of $U_{ij}$ on $\Fl(\C^n)$ preserves $\Hess(S,h)$.
Then 
\[
\begin{split}
H\ni ((I+cE_{ij})g)^{-1}S(I+cE_{ij})g&=g^{-1}(I-cE_{ij})S(I+cE_{ij})g\\
&=g^{-1}(S+cSE_{ij}-cE_{ij}S)g\\
&=g^{-1}Sg+cg^{-1}(SE_{ij}-E_{ij}S)g\\
&=g^{-1}Sg+cg^{-1}(d_i-d_j)E_{ij}g
\end{split}
\]
for any $gB\in \Hess(S,h)$ and $c\in \C$, where $d_k$ for $k=i,j$ is the $k$-th diagonal entry of $S$.
Since $g^{-1}Sg\in H$ and $d_k$'s are mutually distinct, the above computation shows that 
\begin{equation} \label{eq:Eij}
g^{-1}E_{ij}g\in H.
\end{equation}
If $g=(a_{k\ell})\in \mathrm{SL}_n(\C)$, then an elementary computation shows that the $(n,1)$ entry of $g^{-1}E_{ij}g$ is given by $\widetilde{a}_{in}a_{j1}$ where $\widetilde{a}_{in}$ denotes the cofactor of $a_{in}$.
Remember that any permutation flag is in $\Hess(S,h)$.
Therefore, \eqref{eq:Eij} must be satisfied for any permutation matrix $g$.
As $g$, we take a permutation matrix such that $(j,1)$ entry and $(i,n)$ entry are both $1$.
Then the $(n,1)$ entry of $g^{-1}E_{ij}g$ is $\pm 1$.
However, if $\Hess(S,h)\not=\Fl(n)$, that is $h\not=(n,\dots,n)$, then $h(1)\le n-1$ so that $b_{n1}=0$ for any $(b_{ij})\in H$.
This is a contradiction.
Thus, the restricted action of $U_{ij}$ on $\Fl(\C^n)$ does not preserve $\Hess(S,h)$ when $\Hess(S,h)\not=\Fl(\C^n)$.

\end{document}